\newcommand{\subscript}[2]{$#1 _ #2$}
\newtheorem{theorem}{Theorem}[section]
\newtheorem{lemma}[theorem]{Lemma}
\newtheorem{proposition}[theorem]{Proposition}
\newtheorem{corollary}[theorem]{Corollary}
\newtheorem{problem}[theorem]{Problem}
\theoremstyle{definition}
\newtheorem{definition}[theorem]{Definition}
\newtheorem{example}[theorem]{Example}
\theoremstyle{remark}
\newtheorem{remark}[theorem]{Remark}
\numberwithin{equation}{section}
\newfont{\kh}{msbm10}
\begin{document}
\title[Atiyah-J\"{a}nich theorem]
{Atiyah-J\"{a}nich theorem for $\sigma$-C*-algebras}
\author{K. Sharifi}
\address{Kamran Sharifi,
\newline Department of Mathematics,
Shahrood University, P.
O. Box 3619995161-316, Shahrood, Iran.}
\email{sharifi.kamran@gmail.com}


\subjclass[2010]{Primary 46L80; Secondary 19K35, 46M20, 46L08}
\keywords{Representable K-theory, inverse limit, $\sigma$-C*-algebras,
Milnor ${\rm lim}^{1}$ exact sequence, Fredholm operators, Hilbert modules.}
\begin{abstract}
K-theory for $ \sigma$-C*-algebras
(countable inverse limits of C*-algebras) has been investigated by
N. C. Phillips [{\it K-Theory} {\bf 3}
(1989), 441--478].
We use his representable K-theory to show that
the space of Fredholm modular operators
with coefficients in an arbitrary unital $ \sigma$-C*-algebra $A$,
represents the functor $X \mapsto {\rm RK}_{0}(C(X, A))$
from the category of countably compactly generated spaces
to the category of abelian groups.
\end{abstract}
\maketitle

\section{introduction}

Atiyah and J\"{a}nich in the 1960s proved that
the space of Fredholm operators
on an infinite dimensional complex Hilbert space equipped with the norm topology represents
the functor $X \mapsto {\rm K}^{0}(X; \mathbb{C})$ from the category of compact
Hausdorff spaces to the category of
abelian groups. Indeed, they presented different methods in \cite{Atiyah1967, JAN} to show that
$$ [X, \mathcal{F}] \to {\rm K}^{0}(X; \mathbb{C})$$
is an isomorphism, where $[X, \mathcal{F}]$ denotes the set of homotopy classes of continuous maps
from the compact Hausdorff space $X$ into the space of Fredholm operators $\mathcal{F}$.
After that,  Atiyah and Singer \cite{AtiyahSinger} proved that the
functor $X \mapsto {\rm K}^{1}(X; \mathbb{C})$ can be represented by a specific path
component of the space of selfadjoint Fredholm operators. Toritsky \cite{Torit, ManTorit} and Mingo \cite{MIN}
generalized the result of Atiyah and J\"{a}nich to the general case
where $A$ is an arbitrary unital C*-algebra. In fact,
Toritsky and Mingo showed, respectively, the abelian groups ${\rm K}^0(X;A)$ and ${\rm K}_0(C(X, A))$
can be realized as the group $[X, \mathcal{F}(H)]$ of homotopy classes of continuous maps
from the compact Hausdorff space $X$ to the space of Fredholm operators
on the standard Hilbert $A$-module $H=l^2(A)$.

In general, inverse limits do not commute with fundamental group and K-theory functors, however,
we show that the Atiyah--J\"{a}nich theorem still holds for
countable inverse limits of C*-algebras (or what are now known as {\it $\sigma$-C*-algebras}).
Indeed, if $X$ is a
countably compactly generated space and $A$ is an arbitrary $ \sigma$-C*-algebra, we show that
the abelian group ${\rm RK}_0(C(X, A))$
can be realized as the group $[X, \mathcal{F}( H )]$ of homotopy
classes of continuous maps from $X$ to the space of Fredholm
operators on $H = l^2(A)$. In particular, this shows that
the Grothendieck group of $A$-vector bundles over $X$ need not be isomorphic to $[X, \mathcal{F}(H)]$.

A pro-C*-algebra
is a complete Hausdorff complex topological $*$-algebra
$ A$, whose topology is determined by its continuous
C*-seminorms in the sense that the net $\{ a_i \}_{i \in I}$
converges to $0$ if and only if the net $\{p( a_i) \}_{i \in I}$
converges to $0$ for every continuous C*-seminorm $p$ on  $
A$. These algebras were first introduced by
Inoue \cite{Inoue1971} as a generalization of
C*-algebras and studied more by Arveson,
Fragoulopoulou,  Phillips and Voiculescu
with different names like
locally C*-algebras and LMC*-algebras in their literature
 \cite{ARV1982, Fragoulopoulou2005, Phillips1988, VOC}.
If the topology of a pro-C*-algebra
is determined by only countably many C*-seminorms then we call it a
{\it $\sigma$-C*-algebra}.
Such algebras arise naturally in the study of coarse Baum--Connes assembly map
\cite{EMMADualizing} and certain aspects of C*-algebras such as tangent algebras of C*-algebras, domain of
closed $*$-derivations on C*-algebras, multipliers of
Pedersen's ideal, noncommutative analogues of classical Lie groups, and K-theory. The
reader is encouraged to study the publications \cite{HartiLukas, Phillips1988, Phillips88lon}
for useful examples and more detailed information.

Phillips has defined a K-theory for $\sigma$-algebras,
which is called representable K-theory and denoted by RK. The RK-theory agrees with
the usual K-theory of C*-algebras if the input is a C*-algebra and important properties
that K-theory satisfies generalize to RK-theory.
He showed that representable K-theory is a representable functor in
the sense that there are $\sigma$-C*-algebras $P$ and $U_{nc}$ such that
${\rm RK}_0(A) \cong [P,A]_1$ and ${\rm RK}_1(A) \cong [U_{nc},A]_1$, where $[A,B]_1$
denotes the set of unital homotopy classes of $*$-homomorphisms from $A$ to $B$,
cf. \cite{Phillips89k, Phillips89ca}. KK-theoy
of $\sigma$-C*-algebras has been studied by Schochet \cite{SchochetIII, SchochetKK} and Weidner \cite{Weidner}
and reconsidered recently
by Emerson and Meyer \cite{EMMADualizing, EMMA} and Mahanta \cite{MahantaKyoto}.
Suppose $H$ is the standard Hilbert module over an arbitrary
unital $\sigma$-C*-algebra $A$. Let $\mathcal{L}(H)$ be the $\sigma$-C*-algebra of bounded adjointable operators
on $H$ and $\mathcal{K}(H)$ be the $\sigma$-C*-algebra of compact operators.
We show that any unitary in $\mathcal{L}(H) / \mathcal{K}(H)$
with trivial class in ${\rm RK}_0(A)$ can be lifted to a unitary in $ \mathcal{L}(H)$. This enables
us to prove our main result as follows. Suppose $X$ is a countably compactly generated space, i.e.,
countable direct limits of compact Hausdorff spaces and $C(X, A)$ is the $\sigma$-C*-algebra of all
continuous functions from $X$ to $A$. Then there is an isomorphism from
$ [X, \mathcal{F}(H)]$ to the abelian group ${\rm RK}_0(C(X,A))$, where
$[X, \mathcal{F}(H)]$ denotes the set of homotopy classes of continuous maps
from $X$ into the space of Fredholm operators on $H$ equipped with
the topology induced by a family of seminorms.

\section{representable k-theory and the milnor ${\rm lim}^{1}$ exact sequence}
In this section we study inverse system of C*-algebras and Hilbert
modules over them and then we recall representable K-theory
and the Milnor $\varprojlim_{n}^{ 1}$ exact sequence.
General definitions and basic facts about inverse systems can be found in the books
\cite{BourbakiTop, JoitaBook1, Fragoulopoulou2005, SchochetPext, Weibel}.


Let $A$ be a pro-C*-algebra and let $S(A)$ denote the set of all continuous C*-seminorms on $A$.
For any $p\in S(A)$,  $\ker(p)= \{a\in A ;~ p(a)=0\}$ is a two-sided
closed $*$-ideal in $A$ and $A_p= A/\ker(p)$ is a C*-algebra.
The set $S(A)$ is directed by declaring $p \leq q$ if and only if $p(a) \leq q(a)$ for all $a\in A$.
Suppose $\kappa_p : A \to A_p$ is the canonical map,
the surjective canonical map $ \pi_{pq}  : A_p \to A_q$ is
defined by $ \pi_{pq}  ( \kappa_p (a))= \kappa_q (a)$ for all $a \in
A$ and $p, q \in S(
A)$ with $p \geq q$. Then $ \{ A_p; \pi_{pq}
\}_{ p,\,q \in S( A),\, p \geq q }$ is an inverse
system of C*-algebras and $ \varprojlim_{p} A_{p}$ is a pro-C*-algebra which can be
identified with $ A$. A pro-C*-algebra $A$ is called a {\it $\sigma$-C*-algebra}
if there is a countable (cofinal) subset $S'$ of $S(A)$
which determines that topology of $A$.
Given any $\sigma$-C*-algebra $A$ one can find a confinal subset of $S(A)$ which corresponds to $ \mathbb{N}$
and explicitly write $A$ as a countable inverse limit of C*-algebras, i.e. $A \cong \varprojlim_{n} A_{n}$.
Furthermore, the connecting $*$-homomorphisms of the inverse system $\{A_n;~ \pi_{n} \}_{n \in \mathbb{N} }$
can be arranged to be surjective without altering the inverse
limit of C*-algebras.
For $\sigma$-C*-algebras $A \cong \varprojlim_{n} A_n$ and $B \cong \varprojlim_{n} B_n$,
$M(A)$ denotes the multiplier algebra of $A$ and $A \otimes B$ denotes the the minimal
tensor product of $A$ and $B$. In particular, we have
$M(A) \cong \varprojlim_{n} M(A_n)$ and $A \otimes B \cong \varprojlim_{n} A_n \otimes B_n$.
For the general theory of
pro-C*-algebra we refer to \cite{Bhatt1991, Fragoulopoulou2005,Phillips1988}.

Hilbert modules over $\sigma$-C*-algebras are the same as over ordinary C*-algebras,
except that inner product, instead of being C*-valued, takes
its values in a $\sigma$-C*-algebra. A Hilbert module over a $\sigma$-C*-algebra $A$
gets its complete topology from the family of seminorms
$\| \cdot \|_{p}= p( \langle \cdot, \cdot \rangle) ^{1/2}$.
Let $E$ be a Hilbert $A$-module and $p_{n}$ be an arbitrary continuous C*-seminorm on $A$.
Then $ N_{p_n}^{E} =\{\xi \in E; \
\| \xi \|_{p_n} =0\}$ is a closed submodule of $E$  and
$E_{n}=E / N_{p_n}^{E}  $ is a Hilbert
$A_{n}$-module with the action $(\xi
+ N_{p_n}^{E}  )\kappa_{n}(a)=\xi a+N_{p_n}^{E}  $ and the inner product
$ \left\langle \xi +N_{p_n}^{E}  , \eta + N_{p_n}^{E}   \right\rangle =\kappa_n
(\left\langle \xi ,\eta \right\rangle ).$ Suppose
$\sigma_{n} : E \to E_{n}$ is the
canonical map, the
surjective canonical map $\varsigma_n
: E_{n+1} \to E_{n}$ is defined by $\varsigma _{n}  (\sigma
_{n+1}  (\xi ))=\sigma _{n}  (\xi )$ for all $\xi \in E$. Then
$\{E_{n}; A_{n};\varsigma _{n}  ,\pi _{n} \}_{ n \in \mathbb{N}}$
is an inverse system of Hilbert C*-modules
and $\varprojlim_{n} E_{n}$ is a Hilbert $A$-module which
can be identified with $E$. The set
$\mathcal{L}(E)$ of all bounded adjointable $
A$-module maps on $E$ becomes a $\sigma$-C*-algebra
with the topology defined by the family of seminorms
$ \|t \|_{n}= \| ( \kappa_n)_{*}(t) \|$, where the surjective morphisms
$ ( \kappa_n)_{*} : \mathcal{L}(E)  \to
\mathcal{L}(E_n)$ are defined by $ ( \kappa_n )_{*}(t)( \sigma_n(\xi) )= \sigma_n( t \xi)$.
The surjective morphisms $ ( \pi_{n})_{*}: \mathcal{L}(E_{n+1}) \to
\mathcal{L}(E_n)$ are defined by $(\pi_{n})_{*}(t_{n+1})( \sigma_{n}( \xi)) = \varsigma
_{n}(t_{n+1}( \sigma_{n+1}( \xi))$. Then $ \{ \mathcal{L}(E_n); \ (\pi_{n})_{*} \}_{ n \in \mathbb{N}}$
is an inverse system of C*-algebras and $\varprojlim_{n}
\mathcal{L}(E_n)$ can be identified by
$\mathcal{L}(E)$. The set  $\mathcal{K}(E)$ of all compact operators on $E$ is
a closed two sided $*$-ideal of $\mathcal{L}(E)$. Consider the restriction of $(\pi_{n})_{*}$
to $ \mathcal{K}(E_n)$, denoted by $(\pi_{n})_{*}$ again,
then the family $ \{ \mathcal{K}(E_n); \ (\pi_{n})_{*} \}_{
n \in \mathbb{N}}$ is an inverse system of C*-algebras and
$\varprojlim_{n} \mathcal{K}(E_n)$ can be identified by $\mathcal{K}(E)$. Moreover,
we have the isomorphism $\mathcal{L}(E) \cong M( \mathcal{K}(E))$. The $\sigma$-C*-algebra
$\mathcal{L}(E) / \mathcal{K}(E)$ is denoted by $ \mathcal{Q}(E)$ which can be identified by
$\varprojlim_{n} \mathcal{Q}(E_n)$. The quotient maps
$ \mathcal{L}(E) \to  \mathcal{L}(E) / \mathcal{K}(E)$ and
$ \mathcal{L}(E_n) \to  \mathcal{L}(E_n) / \mathcal{K}(E_n)$
are denoted by $\rho$ and $\rho_n$, respectively.
Hilbert modules over pro-C*-algebras have
been studied systematically in the book \cite{JoitaBook1}.

Let $A$ be a $\sigma$-C*-algebra and $E$ be Hilbert a $A$-module.
We define the set of {\it Fredholm operators}
and {\it essentially unitary operators} on $E$ by
$$\mathcal{F}(E):=\{ T \in \mathcal{L}(E); \rho(T)~{\rm is~ invertible ~in~}  \mathcal{Q}(E) \},$$
$$\mathcal{KC}(E):=\{ T \in \mathcal{L}(E); \rho(T)~{\rm is~a ~ unitary ~in~}  \mathcal{Q}(E) \},$$
respectively. Indeed, $t \in \mathcal{L}(H)$ is Fredholm if and only if it
has a left generalized inverse as well as a
right generalized inverse mod $\mathcal{K}(E)$  i.e., there are $s_1, s_2 \in \mathcal{L}(E)$ such
that $s_{1}t=1 \ {\rm mod} \ \mathcal{K}(E)$ and $ts_{2}=1 \ {\rm mod} \ \mathcal{K}(E)$. We can equip the
above sets with the topology which is
generated by the operator-seminorms $\| \cdot \|_n$, and consider them as topological spaces. The following
lemma can be deduced from the above definitions and the
isomorphism $\mathcal{L}(E) \to \varprojlim_{n} \mathcal{L}(E_n)$.

\begin{lemma}\label{Fred1} Suppose $t \in \mathcal{L}(E)$. Then
$t$ is Fredholm if and only if $ (\kappa_{n})_{*}(t)
\in \mathcal{L}(E_n)$ is Fredholm for every $n \in \mathbb{N}$.
In particular, for every $n$ we have
$(\kappa_{n})_{*} (\mathcal{F}(E))~
\subseteq ~~\mathcal{F}(E_n)$ and $
(\pi_{n})_{*} (\mathcal{F}(E_{n+1}))
\subseteq \mathcal{F}(E_n).$ The family
$ \{ \mathcal{F}(E_n); \ (\pi_{n})_{*} \}_{n \in \mathbb{N}}$
is an inverse system of topological
spaces and the map $ \Psi : \mathcal{F}(E) \to
 \varprojlim_{n} \mathcal{F}(E_n)$  given by
$ \Psi(t)= ((\kappa_n)_{*}(t))_n$
is a homeomorphism between Hausdorff locally convex spaces.
The assertions hold if the space of Fredholm operators $\mathcal{F}$ is replaced by
the space of essentially unitary operators $\mathcal{KC}$.
\end{lemma}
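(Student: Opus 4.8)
The plan is to reduce every assertion to a single structural fact: in a countable inverse limit $B \cong \varprojlim_n B_n$ of C*-algebras, an element $b = (b_n)_n$ is invertible (respectively, a unitary) in $B$ if and only if each component $b_n$ is invertible (respectively, a unitary) in $B_n$. The forward implication is immediate, since the projections $B \to B_n$ are unital $*$-homomorphisms and hence preserve invertibility and unitarity. For the converse, if each $b_n$ is invertible, then uniqueness of inverses in $B_n$ together with the fact that the connecting maps $\pi_n$ are unital $*$-homomorphisms forces $\pi_n(b_{n+1}^{-1}) = b_n^{-1}$; thus $(b_n^{-1})_n$ is a compatible family, defines an element of $\varprojlim_n B_n$, and is a two-sided inverse of $b$. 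The unitary case is identical, noting that $b^*b = 1$ and $bb^* = 1$ can be checked componentwise. I would record this as the opening observation and apply it to $B = \mathcal{Q}(E) \cong \varprojlim_n \mathcal{Q}(E_n)$.

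For the Fredholm characterization, I would first note that since $(\kappa_n)_*$ carries $\mathcal{K}(E)$ into $\mathcal{K}(E_n)$, it descends to a $*$-homomorphism $\overline{(\kappa_n)_*} : \mathcal{Q}(E) \to \mathcal{Q}(E_n)$ satisfying $\overline{(\kappa_n)_*} \circ \rho = \rho_n \circ (\kappa_n)_*$, and that under the identification $\mathcal{Q}(E) \cong \varprojlim_n \mathcal{Q}(E_n)$ this descended map is precisely the $n$-th projection; hence $\rho(t)$ corresponds to the family $(\rho_n((\kappa_n)_*(t)))_n$. Applying the structural fact, $\rho(t)$ is invertible in $\mathcal{Q}(E)$ if and only if each $\rho_n((\kappa_n)_*(t))$ is invertible in $\mathcal{Q}(E_n)$, that is, if and only if $(\kappa_n)_*(t)$ is Fredholm for every $n$. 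The two inclusions are then immediate: $(\kappa_n)_*(\mathcal{F}(E)) \subseteq \mathcal{F}(E_n)$ simply restates the characterization, while $(\pi_n)_*(\mathcal{F}(E_{n+1})) \subseteq \mathcal{F}(E_n)$ follows because $(\pi_n)_*$ descends to a $*$-homomorphism $\mathcal{Q}(E_{n+1}) \to \mathcal{Q}(E_n)$ which preserves invertibility. These descended maps, being compatible restrictions of the connecting morphisms, turn $\{\mathcal{F}(E_n); (\pi_n)_*\}_n$ into an inverse system of topological spaces, each $\mathcal{F}(E_n)$ carrying the subspace topology from $\mathcal{L}(E_n)$.

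For the homeomorphism, I would exploit the homeomorphism $\Phi : \mathcal{L}(E) \to \varprojlim_n \mathcal{L}(E_n)$, $\Phi(t) = ((\kappa_n)_*(t))_n$, already recorded in the preceding discussion, of which $\Psi$ is simply the restriction to $\mathcal{F}(E)$. The Fredholm characterization shows that $t \in \mathcal{F}(E)$ if and only if $\Phi(t)$ lies in the subset $\varprojlim_n \mathcal{F}(E_n) \subseteq \varprojlim_n \mathcal{L}(E_n)$, so $\Phi$ restricts to a bijection $\Psi : \mathcal{F}(E) \to \varprojlim_n \mathcal{F}(E_n)$. Since $\mathcal{F}(E)$ carries the subspace topology from $\mathcal{L}(E)$ and $\Phi$ is a homeomorphism, $\Psi$ is a homeomorphism onto $\varprojlim_n \mathcal{F}(E_n)$ once I invoke the standard fact that the inverse limit of the subspaces $\mathcal{F}(E_n)$, formed in topological spaces, coincides with the corresponding subspace of $\varprojlim_n \mathcal{L}(E_n)$. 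Both spaces are Hausdorff and locally convex because $\mathcal{L}(E)$ is a $\sigma$-C*-algebra and $\mathcal{F}(E)$ inherits this structure as a subspace. The essentially unitary statement is obtained by the identical argument, with ``invertible'' replaced by ``unitary'' throughout, using the unitary half of the structural fact.

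Finally, I expect the only genuinely delicate point to be the matching of topologies in the homeomorphism step: one must verify that the inverse-limit topology on $\varprojlim_n \mathcal{F}(E_n)$ agrees with the topology it inherits as a subspace of $\varprojlim_n \mathcal{L}(E_n)$, rather than merely that $\Psi$ is a continuous bijection. The algebraic content, by contrast, is entirely carried by the elementary levelwise criterion for invertibility, so the risk lies in the point-set bookkeeping rather than in any substantial analytic estimate.
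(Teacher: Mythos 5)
Your proposal is correct and takes essentially the same route as the paper, which offers no detailed proof at all: it merely remarks that the lemma ``can be deduced from the above definitions and the isomorphism $\mathcal{L}(E) \to \varprojlim_{n} \mathcal{L}(E_n)$,'' i.e., from the identifications $\mathcal{L}(E) \cong \varprojlim_{n}\mathcal{L}(E_n)$, $\mathcal{K}(E) \cong \varprojlim_{n}\mathcal{K}(E_n)$ and $\mathcal{Q}(E) \cong \varprojlim_{n}\mathcal{Q}(E_n)$ together with the levelwise criterion for invertibility and unitarity in a countable inverse limit, which is exactly your opening structural fact. Your write-up correctly supplies the details the paper leaves implicit --- coherence of the family of inverses, the compatibility $\overline{(\kappa_n)_*}\circ\rho = \rho_n\circ(\kappa_n)_*$, and the verification that the inverse-limit topology on $\varprojlim_{n}\mathcal{F}(E_n)$ agrees with the subspace topology inherited from $\varprojlim_{n}\mathcal{L}(E_n)$ --- and all of these steps are sound.
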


We assume $A$ to be a unital $\sigma$-C*-algebra. The standard Hilbert $A$-module
$$l^2(A)= \{ \xi \in \prod_{k=1} ^{ \infty} A :~~~
\sum_{k=1} ^{ \infty} \xi_{k}^{*} \xi_{k} ~{\rm~converges ~in}~ A \},$$
with the inner product $ \langle \xi, \eta \rangle =
 \sum_{k=1} ^{ \infty} \xi_{k}^* \eta_{k}$ will be
denoted by $H$ or $H_A$. If $E$ is a countably generated Hilbert $A$-module, then the
Hilbert $A$-modules $E \oplus H$ and $H$ are isomorphic \cite[Theorem 5.2.7]{JoitaBook1}.

\begin{lemma}\label{Mingo1}{\rm (\cite[Proposition 1.5]{MIN})} Any unitary element in
$\mathcal{Q}(H)$ can be lifted to a partial isometry in $ \mathcal{L}(H)$.
\end{lemma}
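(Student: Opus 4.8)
The plan is to reproduce Mingo's construction directly at the level of $\mathcal{L}(H)$, exploiting that the preceding section equips the standard module over the unital $\sigma$-C*-algebra $A$ with all of the structure used in the C*-algebra case. In particular I will avoid passing to the levels $\mathcal{L}(H_n)$ and reassembling, since a compatible family of partial-isometry lifts need not exist (that route would meet a $\varprojlim^{1}$-type obstruction); instead the partial isometry is manufactured once inside $\mathcal{L}(H)$.

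First I would fix any lift $T \in \mathcal{L}(H)$ of the unitary $u$, so that $\rho(T) = u$ and hence $1 - T^*T,\, 1 - TT^* \in \mathcal{K}(H)$. Using the continuous functional calculus in $\mathcal{L}(H) \cong \varprojlim_{n} \mathcal{L}(H_n)$ I would replace $T$ by $T\,g(T^*T)$ with $g(t)=\min(1,t^{-1/2})$; since $g(1)=1$ and $T^*T \equiv 1$ modulo $\mathcal{K}(H)$, the modification is by a compact operator and leaves $\rho(T)=u$ unchanged, while arranging $0 \le T^*T \le 1$ and $0 \le TT^* \le 1$. Then $a := (1 - T^*T)^{1/2}$ and $b := (1 - TT^*)^{1/2}$ lie in $\mathcal{K}(H)$, are positive, and satisfy the intertwining identities $Ta = bT$ and $aT^* = T^*b$ (first for $a^2,b^2$ from $T(1-T^*T)=(1-TT^*)T$, then for $a,b$ by functional calculus).

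The heart of the argument is to produce an honest partial isometry $V \in \mathcal{L}(H)$ with $V - T \in \mathcal{K}(H)$; then $\rho(V)=u$, and since $V^*V$ and $VV^*$ are projections with $\rho(V^*V)=\rho(V)^*\rho(V)=1=\rho(VV^*)$, the defects $1-V^*V$ and $1-VV^*$ are automatically compact, so $V$ is the required lift (indeed $V \in \mathcal{KC}(H)$). Here I would invoke the two structural facts recorded earlier: the isomorphism $\mathcal{L}(H) \cong M(\mathcal{K}(H))$ and Kasparov's stabilization $E \oplus H \cong H$, the latter supplying isometries $w_1, w_2, \dots \in \mathcal{L}(H)$ with $w_i^* w_j = \delta_{ij}$ and $\sum_i w_i w_i^* = 1$ strictly, that is, an identification $H \cong \bigoplus_i H$. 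Following Mingo, these isometries let one absorb the compact defect $a$ into the spare copies of $H$ and thereby realise the partial-isometry part of the ``polar decomposition'' of $T$ inside $\mathcal{L}(H)$, yielding the desired $V$.

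The main obstacle is precisely this last step, and it is the only place where the module setting departs from the classical Atiyah--J\"{a}nich situation: a general element of $\mathcal{L}(H)$ has no polar decomposition, because the kernel and cokernel of $T$ need not be orthogonally complemented submodules and the support projection of $T^*T$ need not exist in $\mathcal{L}(H)$ (the positive compact operator $a^2$ may have $[0,\|a\|^2]$ as its full spectrum, so there is no spectral gap along which to cut). What rescues the construction is that $H = l^2(A)$ is stable: the absorption $E \oplus H \cong H$ lets the offending compact kernel/cokernel defects be shifted into spare copies of $H$, so that the resulting operator does possess a partial-isometry part in $\mathcal{L}(H)$. Since every ingredient of this construction --- the functional calculus, the multiplier picture $\mathcal{L}(H) \cong M(\mathcal{K}(H))$, and stabilization --- is available verbatim for Hilbert modules over the $\sigma$-C*-algebra $A$, Mingo's C*-algebraic proof of \cite[Proposition 1.5]{MIN} carries over unchanged, which is why the statement may be attributed to it.
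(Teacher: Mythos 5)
Your argument is correct in substance, but it takes a genuinely different route from the paper. The paper's proof is levelwise: for each $n$ it applies Mingo's C*-algebra result to the unitary $u_n=\mathcal{Q}\kappa_n(u)\in\mathcal{Q}(H_n)$ to obtain a partial isometry $v_n\in\mathcal{L}(H_n)$ with $\rho_n(v_n)=u_n$, and then assembles $v=(v_n)$ through the identification $H\cong\varprojlim_n H_n$, quoting \cite[Remark 3.1.3]{JoitaBook1} for the fact that a coherent family of partial isometries is a partial isometry with $\rho(v)=u$. You instead run Mingo's construction once, upstairs in $\mathcal{L}(H)$, after checking that its ingredients --- continuous functional calculus, $\mathcal{L}(H)\cong M(\mathcal{K}(H))$, and the stabilization $E\oplus H\cong H$ --- survive over a $\sigma$-C*-algebra; your normalization $T\mapsto Tg(T^*T)$ and the intertwinings $Ta=bT$, $aT^*=T^*b$ are indeed valid in $\mathcal{L}(H)\cong\varprojlim_n\mathcal{L}(H_n)$, because functional calculus commutes with the surjections $(\kappa_n)_*$. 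Notably, the worry you cite as your reason for avoiding the levelwise route --- that independently chosen lifts need not satisfy $(\pi_n)_*(v_{n+1})=v_n$ --- is exactly the point the paper's proof passes over in silence: each $v_n$ is determined only up to compact perturbation, and partial isometries do not in general lift along C*-surjections, so the asserted coherence is not automatic. The repair is that Mingo's construction is natural: applied to the coherent inputs $T_n=(\kappa_n)_*(T)$ for one fixed lift $T$ of $u$, with the standard isometries on the $H_n$ chosen compatibly, every step commutes with $(\pi_n)_*$, so the outputs are coherent --- which is precisely your ``do it once upstairs'' argument viewed levelwise. Thus the paper's version is shorter because it black-boxes \cite[Proposition 1.5]{MIN} at each level, while yours is self-contained at the limit and dispenses with coherence altogether; your phrase ``$\varprojlim^{1}$-type obstruction'' is loose, though, since what is at stake is compatibility of choices in a tower of nonempty lift-sets, not a derived limit of abelian groups.

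One soft spot in your write-up: the pivotal step --- producing the partial isometry $V$ with $V-T\in\mathcal{K}(H)$ by absorbing the compact defect $a=(1-T^*T)^{1/2}$ into spare copies of $H$ --- is invoked with ``following Mingo'' rather than executed. Given that the lemma is attributed to \cite[Proposition 1.5]{MIN} and you verify that every tool of that construction is available in the $\sigma$-setting, this reduction is acceptable, but a complete proof should display the absorption identities and check that the resulting $V$ lifts $u$ itself rather than $\rho(s)u$ or $u\oplus 0$ for an auxiliary isometry $s$ introduced by the identification $H\cong\bigoplus_i H$; that bookkeeping is where the construction, though routine, is easiest to get wrong.
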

\begin{proof} Suppose $u$ is a unitary element in $\mathcal{Q}(H)$ and $\mathcal{Q} \kappa_n:
\mathcal{Q}(H) \to \mathcal{Q}(H_n)$ is the canonical map. Then for each $n$, $u_n=\mathcal{Q} \kappa_n (u)$
is a unitary element in $\mathcal{Q}(H_n)$, and so by \cite[Proposition 1.5]{MIN},
there exists a partial isometry  $v_n \in
\mathcal{L}(H_n)$ such that $ \rho_n(v_n)=u_n$. We define $v: H \to H$ by $v x = ( v_n  x_n)$, for
every $x=(x_n) \in H = \varprojlim_{n} H_n $. Then $v$ is well defined, that is,
$ \varsigma_{n}(  v_{n+1}  x_{n+1} )= (\pi_n)_{*}(v_{n+1})( \sigma_n(x))=v_{n} x_{n}$, for each $n$.
Using \cite[Remark 3.1.3]{JoitaBook1}, the operator $v$ is a partial isometry
in $ \mathcal{L}(H)$ and satisfies $ \rho(v)=u$.
\end{proof}

Let $\mathcal{L}$ be the C*-algebra of bounded operators on
the separable infinite dimensional Hilbert space $l^2(\mathbb{C})$
and $ \mathcal{K}$ be its closed two sided $*$-ideal of compact operators.
Suppose $A$ and $B$ are $\sigma$-C*-algebras
then $A \otimes B$ is an essential ideal
in both algebras $M(A \otimes B)$ and $M(A) \otimes M(B)$, and
so we may regard $M(A) \otimes M(B)$ as a $\sigma$-C*-subalgebra of $M(A \otimes B)$.
In particular, when $A$ is a unital $\sigma$-C*-algebra, $A \otimes \mathcal{L}$ can be
regarded as a $\sigma$-C*-subalgebra of
$\mathcal{L}(H_A) \cong M(A \otimes \mathcal{K})$. The following result follows
from \cite[Lemma 1.10]{MIN} and the facts
$A  \otimes \mathcal{K} \cong \varprojlim_{n}  A_n  \otimes \mathcal{K}$ and
$A \otimes \mathcal{L} \cong \varprojlim_{n} A_n \otimes \mathcal{L}$.

\begin{lemma}\label{Mingo2}
Suppose that $A$ is a unital $\sigma$-C*-algebra and $p \in A  \otimes \mathcal{K}$
is a projection. Then there is an element $w$ in $A \otimes \mathcal{L}$ with $w^*w=1$ and $1-ww^*=p$.
\end{lemma}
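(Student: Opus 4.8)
The plan is to lift the problem to the defining inverse system of C*-algebras, solve it at each finite level by the C*-algebraic input \cite[Lemma 1.10]{MIN}, and then assemble the level-wise solutions into a single coherent element of the inverse limit. Using the identification $A \otimes \mathcal{K} \cong \varprojlim_{n} A_n \otimes \mathcal{K}$, I would write $p=(p_n)_n$, where $p_n$ is the image of $p$ under the canonical $*$-homomorphism $A\otimes\mathcal{K} \to A_n \otimes \mathcal{K}$; since this map is a $*$-homomorphism, each $p_n$ is a projection, and the (surjective) connecting maps send $p_{n+1}\mapsto p_n$. Applying \cite[Lemma 1.10]{MIN} to the unital C*-algebra $A_n$ produces, for every $n$, an element $w_n^0 \in A_n\otimes\mathcal{L}$ with $(w_n^0)^*w_n^0=1$ and $1-w_n^0(w_n^0)^*=p_n$. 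If these could be chosen compatibly with the connecting maps $\phi_n : A_{n+1}\otimes\mathcal{L} \to A_n\otimes\mathcal{L}$, the family $(w_n)$ would define the desired $w \in \varprojlim_{n} A_n\otimes\mathcal{L} \cong A\otimes\mathcal{L}$, and the relations $w^*w=1$, $1-ww^*=p$ would follow coordinatewise.

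The main obstacle is precisely this compatibility: the solutions furnished by \cite[Lemma 1.10]{MIN} are in no way canonical, so in general $\phi_n(w_{n+1}^0)\neq w_n^0$, and one cannot simply lift a partial isometry along $\phi_n$ (the Toeplitz extension shows that even unitaries need not lift to unitaries). I would resolve this by an inductive construction. First note that any two solutions $w_n, w_n'$ at level $n$ differ by a corner unitary: the element $z:=w_n'(w_n)^*$ satisfies $z^*z=zz^*=1-p_n$ and $w_n'=zw_n$, so $z$ is a unitary in the corner $D_n:=(1-p_n)(A_n\otimes\mathcal{L})(1-p_n)$. Having fixed a coherent choice $w_n$, I set $\hat{w}_n:=\phi_n(w_{n+1}^0)$, which is again a level-$n$ solution, and write $w_n = z\,\hat{w}_n$ with $z\in U(D_n)$. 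It then suffices to lift $z$ to a unitary $Z\in U(D_{n+1})$, where $D_{n+1}:=(1-p_{n+1})(A_{n+1}\otimes\mathcal{L})(1-p_{n+1})$, with $\phi_n(Z)=z$; setting $w_{n+1}:=Z\,w_{n+1}^0$ then gives a level-$(n+1)$ solution with $\phi_n(w_{n+1})=w_n$, as a direct computation using $Z^*Z=1-p_{n+1}=w_{n+1}^0(w_{n+1}^0)^*$ confirms.

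The lifting of $z$ is where the stable structure is essential. Because $w_n$ implements an isomorphism $H_{A_n}\cong(1-p_n)H_{A_n}$, the corner $D_n$ is isomorphic to $\mathcal{L}(H_{A_n})\cong M(A_n\otimes\mathcal{K})$, whose unitary group is connected by the Kuiper-type theorem for stable multiplier algebras \cite{MIN}. Hence $z$ is a finite product of exponentials $\prod_j e^{\mathrm{i}h_j}$ with $h_j=h_j^*\in D_n$; lifting each $h_j$ to a self-adjoint element $H_j\in D_{n+1}$ along the surjection $\phi_n|_{D_{n+1}}:D_{n+1}\to D_n$ and setting $Z:=\prod_j e^{\mathrm{i}H_j}$ produces the required unitary lift. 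This removes the $K_1$/index obstruction that blocks naive partial-isometry lifting, completes the inductive step, and yields the coherent family $(w_n)$ and hence $w$. The only points requiring care are the surjectivity of $\phi_n$ on the corners (which follows from $\phi_n(1-p_{n+1})=1-p_n$ together with surjectivity of $\phi_n$) and the verification of the two operator identities at each stage, both of which are routine.
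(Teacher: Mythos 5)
Your strategy is exactly the one the paper itself gestures at: the paper offers no proof beyond the one-line remark that the lemma ``follows from \cite[Lemma 1.10]{MIN} and the facts $A\otimes\mathcal{K}\cong\varprojlim_{n}A_n\otimes\mathcal{K}$ and $A\otimes\mathcal{L}\cong\varprojlim_{n}A_n\otimes\mathcal{L}$.'' You have correctly identified that this hides a genuine issue --- the isometries produced levelwise by Mingo's lemma are in no way coherent with the connecting maps --- and your inductive correction scheme is the right way to close it. The torsor analysis (two solutions at level $n$ differ by a unitary $z$ of the corner $D_n=(1-p_n)(A_n\otimes\mathcal{L})(1-p_n)$, and a solution at level $n+1$ mapping onto a prescribed $w_n$ exists as soon as $z$ admits a unitary lift to $D_{n+1}$), the algebraic verifications of $w_{n+1}^*w_{n+1}=1$ and $1-w_{n+1}w_{n+1}^*=p_{n+1}$, the surjectivity of $\phi_n$ on corners, and the lifting of self-adjoint exponents along the surjection are all correct.

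There is, however, a flaw in your justification of the one nontrivial input, the connectivity of $U(D_n)$. You claim $D_n\cong\mathcal{L}(H_{A_n})\cong M(A_n\otimes\mathcal{K})$ and invoke Kuiper for stable multiplier algebras. That identification is false: conjugation by $w_n$ (an isometry \emph{in} $A_n\otimes\mathcal{L}$ with range projection $1-p_n$) is a unital $*$-isomorphism of $A_n\otimes\mathcal{L}$ onto $D_n$, so $D_n\cong A_n\otimes\mathcal{L}$, which is a proper subalgebra of $M(A_n\otimes\mathcal{K})$ whenever $A_n$ is infinite dimensional. The distinction is not cosmetic: the entire point of the lemma, and of its use in Lemma \ref{Janich}, is to produce $w$ in the small algebra $A\otimes\mathcal{L}$; if you enlarged the corners to $(1-p_n)\mathcal{L}(H_{A_n})(1-p_n)$ so that Kuiper genuinely applies, your corrected $w_{n+1}$ would only lie in $\mathcal{L}(H_{A_{n+1}})$ and the conclusion $w\in A\otimes\mathcal{L}$ would be lost. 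The step is repairable, because $U(B\otimes\mathcal{L})$ is in fact connected for every unital C*-algebra $B$: the algebra $B\otimes\mathcal{L}$ is properly infinite (it contains the isometries $1\otimes s_1$, $1\otimes s_2$ with orthogonal ranges), for a properly infinite unital C*-algebra $D$ one has $U(D)/U_0(D)\cong K_1(D)$ by Cuntz's description of $K_1$, and $K_1(B\otimes\mathcal{L})=0$ by an Eilenberg swindle: the infinite-multiplicity endomorphism $\Phi=\mathrm{id}_B\otimes\phi_\infty$, with $\phi_\infty(T)=\sum_{i}s_iTs_i^*$ a unital endomorphism of $\mathcal{L}$, decomposes as the orthogonal sum of $\mathrm{id}_B\otimes\mathrm{Ad}(s_1)$ and a conjugate of $\Phi$ by the isometry $1\otimes\sum_i s_{i+1}s_i^*$, whence $[\Phi(v)]=[v]+[\Phi(v)]$ and so $[v]=0$ in $K_1$ for every unitary $v$. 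With this substitution (or a correct reference for the connectivity of $U(A_n\otimes\mathcal{L})$ --- Kuiper's theorem for $M(A_n\otimes\mathcal{K})$ is not one), $z\in U_0(D_n)$ is a finite product of exponentials, your induction closes, and the proof is complete; it then supplies precisely the coherence argument the paper's citation-only derivation leaves unaddressed.
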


Suppose $X$ and $Y$ are topological Hausdorff spaces. Let
$[X,Y]$ denote the set of homotopy classes of continuous maps
from $X$ into $Y$ and let $\pi_n(Y)$ denote the $n$-th homotopy group of $Y$ (i.e.
the set of homotopy classes of continuous maps from the $n$-sphere $S^n$ into $Y$). In particular,
$\pi_0(Y)$ is the set of path components of $Y$.
Let $C(X,Y)$ denote
the set of all continuous functions from $X$ to $Y$ with the compact-open topology.
The space $C(X, \mathbb{C})$ is denoted by $C(X)$. A space
$X= \varinjlim_{n} X_n$
is a {\it countably compactly generated space} if each $X_n$ is compact and Hausdorff,
i.e., $X$ is a countable direct limit of compact Hausdorff spaces in the category of topological
spaces and continuous maps. This is not the same as being $\sigma$-compact and compactly generated.
A countably compactly generated space is paracompact if it is regular \cite{FrankSmith}.

If $A= \varprojlim_{n} A_n$ is a unital $\sigma$-C*-algebra and $X= \varinjlim_{n} X_n$
is a countably compactly generated space, then $C(X, A)$ is the algebra
of all continuous functions from $X$ to $A$ is a $\sigma$-C*-algebra with the topology
determined by the family of C*-seminorms $\| f \|_{X_n,~ \iota_n,~ p_m}=
{\rm sup}_{x \in X_n} \, p_m( f \circ \iota_n (x))$ for canonical morphisms
$\iota_n : X_n \to X$ and $p_m \in S(A)$. This topology is equivalent
with the topology of uniform convergence on each $X_n$ in each continuous
C*-seminorm $p_m$. The obvious map from
$C(X) \otimes A$ to $C(X,A)$ is an isomorphism \cite[Proposition 3.4]{Phillips1988}.
In addition, we have the natural isomorphisms
$C(X,A) \cong \varprojlim_{n} C(X,A_n)$ and  $C(X,A) \cong \varprojlim_{n} C(X_n,A)$.
For $X$ locally compact,
$C_0(X)$ is as usual the C*-algebra of continuous complex-valued functions on $X$
which vanish at infinity.

One should be aware that the inverse limit can also be
constructed inside the category of C*-algebras; however, the two results will not agree. For
instance, if $X= \varinjlim_{n} X_n$
is a countably compactly generated space then $C(X)= \varprojlim_{n} C(X_n)$
inside the category of
topological $*$-algebras, while that inside the category of C*-algebras is
$C_b(X) = C( \beta X )$, where $\beta X$ is the
Stone-\u{C}ech compactification of $X$, see e.g. \cite{Phillips88lon} for more details.

The group of unitary elements of a unital $\sigma$-C*-algebra $A$
will be denoted by $U A$ and the path component of the identity in $U A$
will be denoted by $U_0 A$.
In contrast to the case of a C*-algebra, the subgroup $U_0 A$ not
only need be open in $U A$,
it need not even be closed \cite[Example 3.7]{Phillips89k}.

The appropriate K-theory for $\sigma$-C*-algebras is representable K-theory, which is denoted by RK,
is an extension of the topological K-theory of C*-algebras.

\begin{definition}  {\rm (\cite[Definition 2.1]{Phillips89k})}
Let $A$ be a unital $\sigma$-C*-algebra and $H=l^2(A)$. Then we define
${\rm RK}_0( A)= U \mathcal{Q}(H) / U_0 \mathcal{Q}(H)$ and ${\rm RK}_i(A)=
{\rm RK}_0(S^{i}A)$, where $SA=A \otimes C_0( \mathbb{R})$ is the suspension of $A$ and
$S^{i}A=A \otimes C_0( \mathbb{R}^{i})$ is the $i$-th suspension of $A$.
\end{definition}

Each ${\rm RK}_i$ is a well defined homotopy invariant functor from
the category of $\sigma$-C*-algebras to the category of abelian groups.
If $A$ is a unital C*-algebra, then ${\rm RK}_i(A)$ is naturally isomorphic to
the usual K-theory ${\rm K}_i( A)$. One should be aware
that ${\rm RK}_0$ is quite different with the group made from finitely
generated projective modules over $A$.
Stability, Bott periodicity and six term (long) exact sequence for RK-theory and
more detailed information about the above facts can be found in \cite{Phillips89k}.
The abelian group ${\rm RK}_0(A)$ is isomorphic with the set
of homotopy classes in the set of projections $p$ in the unitization $\mathcal{K}(H \oplus H)^{+}$
such that $p - \left[ \begin{smallmatrix} 1 & 0 \\ 0 &
0   \end{smallmatrix} \right] \in \mathcal{K}(H \oplus H)$. This enables
Phillips to construct $\sigma$-C*-algebras $P$ and $U_{nc}$ equipped with the
appropriate analog of an $H$-group structure,
such that the natural isomorphisms of abelian groups
${\rm RK}_0(A) \cong [P,A]_{1}$ and ${\rm RK}_{1}(A) \cong [U_{nc}, A]_{1}$
are fulfilled  \cite{Phillips89ca, Phillips92Aus}.
Here $[A,B]_{1}$ denotes the set of unital homotopy classes of $*$-homomorphism
from $A$ to $B$.

The direct limit of C*-algebras is always a C*-algebra which implies the continuity
of the K-functor with respect to the inductive limit, see e.g., \cite{WEG}.
In contrast to the case of
direct limits of C*-algebras, the behavior of K-theory with respect to inverse
limits is more complicated.
\begin{example}
Kawamura \cite{KAW} constructed an inverse system of Cuntz algebras $\{ \mathcal{O}_n:
2 \leq n < \infty \}$ with non-surjective connecting maps $ \mathcal{O}_{n+1}
\to \mathcal{O}_{n}$ whose inverse limit is $*$-isomorphic onto $\mathcal{O}_{\infty}$.
The inverse system obtains the following fact:
$$ {\rm K}_0( \varprojlim_{n} \mathcal{O}_{n}) \cong {\rm K}_0(\mathcal{O}_{\infty})
 \cong \mathbb{Z} \not\cong  \hat{ \mathbb{Z}}  =
\varprojlim_{n}  \mathbb{Z} / n \mathbb{Z} \cong \varprojlim_{n} {\rm K}_0( \mathcal{O}_{n} ). $$
\end{example}

Suppose $K_1 \subset K_2 \subset ...$ is a sequence of CW-complexes and ${\rm H}^*$
is an additive cohomology theory.  Milnor  \cite{Milnor} first showed that
$$ 0 \to \varprojlim_{n}\, ^{1} \ {\rm H}^*(K_n) \to {\rm H}^*( \cup K_n) \to
  \varprojlim_{n} {\rm H}^* (K_n) \to 0  $$ is an exact sequence where $\varprojlim_{n}^{ 1}$
is the derived functor of the inverse limit.
We refer the reader to the monographs \cite{Switzer,Weibel} for details concerning
derived functors of the inverse limit.
The above exact sequence, which is known as
{\it Milnor $\varprojlim_{}^{ 1}$ sequence}, has been studied by
 Cohen \cite{Cohen}, Phillips \cite{Phillips89k, Phillips91k} and
 Schochet \cite{SchochetUCT, SchochetPext}
and reconsidered recently by Guentner and Yu \cite{GueYu} in
homotopy and K-theory.

\begin{theorem} \label{philllimit} {\rm (\cite[Theorem 3.2]{Phillips89k})}
Let $\{ A_n \}_{n \in \mathbb{N}}$ be an
inverse system of $\sigma$-C*-algebras with surjective maps $A_{n+1} \to A_{n}$,
which can always be arranged,
then we have the following Milnor $\varprojlim_{n}^{ 1}$ sequence
\begin{equation}\label{ati1}
  0 \to \varprojlim_{n}\, ^{1} \ {\rm RK}_{1-i}(A_n) \to {\rm RK}_i( \varprojlim_{n} A_n) \to
  \varprojlim_{n} {\rm RK}_i(A_n) \to 0.
\end{equation}
\end{theorem}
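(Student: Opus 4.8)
The plan is to reduce the assertion to a Milnor $\varprojlim^{1}$ sequence for the representable homotopy functor, and then to run Milnor's original argument dualized from ``maps out of a mapping telescope'' to ``maps into an inverse limit''. By the representability results recalled above, ${\rm RK}_i(A) \cong [R_i,A]_1$ with $R_0=P$ and $R_1=U_{nc}$, and each $[R_i,-]_1$ is a homotopy-invariant functor to abelian groups, the group law coming from the $H$-group structure on $R_i$. Thus it suffices to produce, for the tower $\{A_n;\pi_n\}$ with $A\cong\varprojlim_n A_n$, a natural short exact sequence relating $[R_i,\varprojlim_n A_n]_1$ to the derived functors of the tower $\{[R_i,A_n]_1\}_n=\{{\rm RK}_i(A_n)\}_n$.

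The engine is the algebraic short exact sequence of $\sigma$-C*-algebras
\begin{equation*}
0 \longrightarrow \varprojlim_n A_n \longrightarrow \prod_n A_n \xrightarrow{\ 1-s\ } \prod_n A_n \longrightarrow 0,
\end{equation*}
where $\prod_n A_n$ carries the product topology, which makes it a $\sigma$-C*-algebra (it is complete, Hausdorff, closed under the coordinatewise $*$-operations, and its topology is given by the countably many seminorms $(a_k)_k\mapsto\|a_m\|$), and $s$ is the shift $(a_k)_k\mapsto(\pi_k(a_{k+1}))_k$ assembled from the connecting maps. Exactness on the right is precisely where the hypothesis that the $\pi_n$ are surjective enters. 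I would then record additivity of the representable functor over countable products, $[R_i,\prod_n A_n]_1\cong\prod_n[R_i,A_n]_1=\prod_n{\rm RK}_i(A_n)$, which is immediate from representability: a unital $*$-homomorphism into a product is a family of unital $*$-homomorphisms into the factors, and the same holds for homotopies since $C([0,1],\prod_n A_n)\cong\prod_n C([0,1],A_n)$. Under this identification $1-s$ induces the self-map $1-s_*$ of $\prod_n{\rm RK}_i(A_n)$, whose kernel is by definition $\varprojlim_n{\rm RK}_i(A_n)$ and whose cokernel is $\varprojlim_n^{1}{\rm RK}_i(A_n)$.

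It remains to feed these computations into an exact sequence for $[R_i,\varprojlim_n A_n]_1$. Surjectivity of the $\pi_n$ supplies the homotopy-lifting property that here plays the role of fibrancy: it lets one promote any compatible family of homotopy classes in $\varprojlim_n{\rm RK}_i(A_n)$ to genuinely compatible $*$-homomorphisms that assemble into a single map $R_i\to\varprojlim_n A_n$, which gives surjectivity of ${\rm RK}_i(\varprojlim_n A_n)\to\varprojlim_n{\rm RK}_i(A_n)$; and it identifies the kernel of this map with the indeterminacy in patching homotopic lifts. That indeterminacy is measured by homotopy data one degree higher, i.e.\ by ${\rm RK}_{i+1}(A_n)$, and precisely by its $\varprojlim^{1}$. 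Bott periodicity rewrites ${\rm RK}_{i+1}$ as ${\rm RK}_{1-i}$, yielding the asserted sequence \eqref{ati1}.

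The main obstacle is that $1-s$ is \emph{not} a $*$-homomorphism, so one may not simply apply the six-term exact sequence of ${\rm RK}$ to the displayed sequence, nor identify the quotient $\prod_n A_n/\varprojlim_n A_n$ with $\prod_n A_n$ as a $\sigma$-C*-algebra, since the bijection induced by $1-s$ is only a topological-linear isomorphism. The way around this is to work throughout at the level of the representable functor $[R_i,-]_1$, where $1-s$ acts through the $H$-group structure of $R_i$ and so is meaningful up to homotopy, and to substitute for fibrancy the homotopy-lifting property for surjections of $\sigma$-C*-algebras tested against the representing objects. Establishing that lifting property and the exactness it produces is the technical heart of the argument; the remainder is the formal Milnor bookkeeping together with the additivity and Bott periodicity of ${\rm RK}$.
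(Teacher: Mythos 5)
You should first note that the paper offers no proof of this theorem at all: it is quoted verbatim from \cite[Theorem 3.2]{Phillips89k}, so the benchmark is Phillips' original argument, which (roughly) runs at the level of spaces rather than classifying algebras: he realizes the groups ${\rm RK}_i(A_n)$ as homotopy groups of the unitary groups $U\mathcal{Q}(H_{A_n})$, shows that surjections of $\sigma$-C*-algebras induce maps of these groups with the homotopy lifting property, and then invokes the classical Milnor $\varprojlim_{n}^{1}$ sequence for the homotopy groups of an inverse limit of a tower of fibrations, Bott periodicity accounting for the degree shift $1-i$. Your outline is the formal dual of this, transported through the representing objects $P$ and $U_{nc}$. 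Several of your preliminary steps are indeed sound: $\prod_n A_n$ is a $\sigma$-C*-algebra, the shift $s$ is a genuine $*$-homomorphism, $1-s$ is surjective exactly because the $\pi_n$ are (solve $a_n-\pi_n(a_{n+1})=b_n$ recursively, choosing preimages at each stage), $[R_i,\prod_n A_n]_1\cong\prod_n[R_i,A_n]_1$ is immediate, and the kernel and cokernel of $(1-s)_*$ on $\prod_n{\rm RK}_i(A_n)$ are $\varprojlim_n{\rm RK}_i(A_n)$ and $\varprojlim_{n}^{1}{\rm RK}_i(A_n)$ by definition.

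The genuine gap is that, as you yourself concede, everything above is scaffolding: since $1-s$ is not a $*$-homomorphism, the displayed sequence cannot be fed into ${\rm RK}$, and the entire burden falls on (i) the homotopy lifting property of surjections of $\sigma$-C*-algebras tested against $R_i$, and (ii) the identification of the kernel of ${\rm RK}_i(\varprojlim_n A_n)\to\varprojlim_n{\rm RK}_i(A_n)$ with $\varprojlim_{n}^{1}{\rm RK}_{1-i}(A_n)$, including injectivity of the $\varprojlim^{1}$ term --- neither of which you prove or even reduce to a citable statement. Point (i) is not a routine verification: surjections of C*-algebras do \emph{not} have the homotopy lifting property with respect to arbitrary sources (the failure of such lifting is precisely why semiprojectivity is a nontrivial notion), so you would need to extract a specific cofibrancy-type property of Phillips' particular algebras $P$ and $U_{nc}$ from their construction. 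Point (ii) is the theorem in disguise: one must construct the map sending a tower of homotopies (differences of lifts) to a coherent family of classes in ${\rm RK}_{i+1}(A_n)$ modulo the $(1-s)_*$-action, and verify well-definedness, additivity, and exactness at both spots; calling this ``formal Milnor bookkeeping'' understates where the work lies. There is also a circularity risk at the very start: the representability ${\rm RK}_0(A)\cong[P,A]_1$ and ${\rm RK}_1(A)\cong[U_{nc},A]_1$ for $\sigma$-C*-algebra inputs $A$ (as opposed to C*-algebra inputs) is established in \cite{Phillips89ca} by arguments intertwined with the Milnor sequence of \cite{Phillips89k} --- exactly the statement you are trying to prove --- so before taking $[R_i,-]_1$ as primitive you must check that the $\sigma$-C*-case of representability can be obtained independently. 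As it stands, your proposal is a plausible program whose technical heart coincides with the content of Phillips' theorem rather than a proof of it.
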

Gray \cite{Gray} showed that $\varprojlim_{}^{ 1}$ of countable groups is either
$0$ or is uncountable. Specially,  the $\varprojlim_{}^{1}$-term in the exact sequence (\ref{ati1}),
even in nice cases, may be non-zero.
To see this, just look at the sequence
$$X_1 { \overset{f_1} \longrightarrow} \cdot\cdot\cdot~  X_n  { \overset{f_n} \longrightarrow} X_{n+1} \longrightarrow \cdot\cdot\cdot$$
where each $X_n$ is homotopy equivalent to the sphere $S^1$ and
each $f_n: X_n \to X_{n+1}$ is the standard map of degree $p$. Here $p$ is a prime number
and $X_{n+1}$ arises from rotating the sphere $S^1$ around itself
$p^{n}$ times. Then
$ \varprojlim_{n} ^{1} \, {\rm K}_0( C(X_n))= 
 \mathbb{Z}_p / \mathbb{Z}$ which is an uncountable group. 
In view of the exact sequence (\ref{ati1}), the Mittag-Leffler condition and the fact that
finite abelian groups satisfy the descending chain condition, we have the
following result.

\begin{proposition} \label{MittLeff} Suppose  $\{ A_n \}_{n \in \mathbb{N}}$ is an
inverse system of C*-algebras for which the morphisms ${\rm K}_i(A_{n+1}) \to {\rm K}_i(A_{n})$
are surjective, or the abelian groups ${\rm K}_i(A_{n})$ are finite.
Then we have the group isomorphism
$ {\rm RK}_i( \, \varprojlim_{n} A_n) \cong  \varprojlim_{n} {\rm K}_i(A_n)$.
\end{proposition}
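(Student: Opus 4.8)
The plan is to feed the two hypotheses into Phillips's Milnor sequence of Theorem \ref{philllimit} and to show that the $\varprojlim^1$ correction term vanishes in each case. First I would observe that each $A_n$ is an ordinary C*-algebra, so by the naturality statement following the definition of representable K-theory we have ${\rm RK}_j(A_n) \cong {\rm K}_j(A_n)$ for every degree $j$ and every $n$. Specializing the exact sequence (\ref{ati1}) to the tower $\{A_n\}$ then yields
\begin{equation*}
0 \to \varprojlim_{n}^{1}\, {\rm K}_{1-i}(A_n) \to {\rm RK}_i(\varprojlim_n A_n) \to \varprojlim_n {\rm K}_i(A_n) \to 0,
\end{equation*}
so the entire problem reduces to proving that the left-hand term is zero; the right-hand arrow then becomes the asserted isomorphism.

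The point that I expect to require the most care, and the only genuine obstacle, is a bookkeeping mismatch of indices: the correction term in (\ref{ati1}) sits in degree $1-i$, whereas the hypotheses are phrased in degree $i$. Since the $A_n$ are C*-algebras, Bott periodicity makes ${\rm K}_\bullet(A_n)$ two-periodic, so only the two groups ${\rm K}_0$ and ${\rm K}_1$ occur; I would therefore read the hypothesis as an assertion about the relevant K-functor in both degrees (equivalently, for all $i$), which makes the surjectivity or finiteness needed in degree $1-i$ available. Once this convention is fixed, the remainder of the argument is formal.

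It then remains to verify the Mittag-Leffler condition for the inverse system $\{{\rm K}_{1-i}(A_n)\}$ of abelian groups under either hypothesis. If the connecting homomorphisms ${\rm K}_{1-i}(A_{n+1}) \to {\rm K}_{1-i}(A_n)$ are surjective, then for each fixed $n$ the decreasing chain of images of ${\rm K}_{1-i}(A_{n+k})$ in ${\rm K}_{1-i}(A_n)$ is constant, equal to the whole group, which is exactly the Mittag-Leffler condition, and the standard vanishing theorem for the derived inverse limit of a tower gives $\varprojlim_n^1 = 0$. If instead each ${\rm K}_{1-i}(A_n)$ is finite, the same decreasing chain of images is a descending sequence of subgroups of a finite group, hence stabilizes by the descending chain condition; this is again Mittag-Leffler and forces $\varprojlim_n^1 = 0$. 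Substituting this vanishing into the short exact sequence above collapses it to the required isomorphism ${\rm RK}_i(\varprojlim_n A_n) \cong \varprojlim_n {\rm K}_i(A_n)$, completing the argument, whose only real content is these two elementary Mittag-Leffler checks.
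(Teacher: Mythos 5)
Your proposal is correct and follows essentially the same route as the paper, which deduces Proposition \ref{MittLeff} in one line from the Milnor sequence (\ref{ati1}) of Theorem \ref{philllimit}, the Mittag-Leffler condition for towers with surjective connecting maps, and the descending chain condition for finite abelian groups --- exactly your two vanishing checks for $\varprojlim_{n}^{1}$. Your explicit handling of the degree shift $i \mapsto 1-i$ via Bott periodicity (reading the hypothesis in both degrees) is a point the paper leaves implicit rather than a different approach.
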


\section{atiyah-j\"{a}nich theorem}
In this section we reformulate some results of Mingo \cite{MIN}
to prove the Atiyah-J\"{a}nich theorem in the framework of
$\sigma$-C*-algebras and obtain a Milnor exact sequence of
homotopy classes for the space of Fredholm operators.

If $H$ is the standard
Hilbert module over an arbitrary unital C*-algebra $A$,
Mingo \cite[Proposition 1.11]{MIN} showed that
any invertible element in $ \mathcal{Q}(H)$
with index zero can be lifted to an invertible operator in $ \mathcal{L}(H)$.
The following argument shows that his result is actually true for unitaries in place
of invertible elements. 
\begin{remark} \label{prop1.5} Suppose $H$ is the standard Hilbert
module over an arbitrary unital C*-algebra $A$.
Then any unitary element $ \hat u $ in $ \mathcal{Q}(H)$
with index zero can be lifted to a unitary operator in $ \mathcal{L}(H)$.
To see this suppose $\hat u= \rho (v )$
for a partial isometry $v $ in
$\mathcal{L}(H)$. Then
\begin{equation} \label{ati20}
0= \delta([\hat u]) = \delta ( [ \rho (v )])=[1-v^* v] - [1 - v v^*],
\end{equation}
where $\delta$ is the index map ${\rm K}_{1}( \mathcal{Q}(H )) \to {\rm K}_0( \mathcal{K}(H ))$. We define
$p =1-v^*  v  \in \mathcal{K}(H )$ and $q =1 - v  v^*  \in \mathcal{K}(H )$. Then, by
(\ref{ati20}) there is a compact projection $r $ such that $p  \oplus r $ is equivalent to $q  \oplus r $ and
$r $ is orthogonal to $p $ and $q $. Hence, $ w  \in \mathcal{K}(H )$ exists such that
$w^*  w = p  \oplus r $ and $w  w^* = q  \oplus r $. The operator $v+w$ is close to a unitary operator, which
is actually defined by
$u := v (1-r ) \oplus ( ( q  \oplus v  r )w )$. Since
$ (v  r )^*  (v  r )=r (1-p )r =r $ and $ (v  r ) (v  r )^*=v  r  v^* $, we find
$u^*  u   =  (1 - (p  \oplus r )) \oplus ( w^* (q  \oplus r ) w ) = 1$ and
$u  u^* =1$. We also have $ \rho (u )= \rho (v ) \ \rho (1 -r ) +
\rho  ( q  \oplus v  r ) \ \rho (w )= \rho (v )+0,$ as desired.
\end{remark}

Suppose $H$ is the standard Hilbert module over an
arbitrary unital $\sigma$-C*-algebra $A$. Then the following
assertion follows from Remark \ref{prop1.5},
or alternatively from \cite[Lemma 1.11]{Phillips89k}.

\begin{lemma}\label{lemma1}
Any unitary element in $ \mathcal{Q}(H)$
with trivial class in ${\rm RK}_0(A)$ can be lifted to a unitary in $ \mathcal{L}(H)$.
\end{lemma}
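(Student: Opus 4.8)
The plan is to reduce the statement to the C*-algebra level, apply Remark \ref{prop1.5} on each level, and then reassemble the resulting unitaries into a coherent family along the inverse system $\mathcal{L}(H) \cong \varprojlim_n \mathcal{L}(H_n)$. Since ${\rm RK}_0(A) = U\mathcal{Q}(H)/U_0\mathcal{Q}(H)$ by definition, the hypothesis that a unitary $\hat u \in U\mathcal{Q}(H)$ has trivial class means precisely that $\hat u \in U_0\mathcal{Q}(H)$. First I would invoke Lemma \ref{Mingo1} to lift $\hat u$ to a partial isometry $v \in \mathcal{L}(H)$, and set $p = 1 - v^*v$ and $q = 1 - vv^*$, which are compact projections. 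Writing $\hat u_n = \mathcal{Q}\kappa_n(\hat u)$, $v_n = (\kappa_n)_*(v)$, $p_n = (\kappa_n)_*(p)$ and $q_n = (\kappa_n)_*(q)$, the canonical maps are continuous unital $*$-homomorphisms, so each $\hat u_n$ lies in $U_0\mathcal{Q}(H_n)$; equivalently $\hat u_n$ has index zero in ${\rm K}_0(A_n)$, which is the same as $[p_n] = [q_n]$ in ${\rm K}_0(\mathcal{K}(H_n)) \cong {\rm K}_0(A_n)$. By construction the $v_n$ form a coherent family, $(\pi_n)_*(v_{n+1}) = v_n$, and likewise for the $p_n$ and $q_n$.

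With index zero established at each level, Remark \ref{prop1.5} applies to $v_n$ and produces a unitary $u_n \in \mathcal{L}(H_n)$ with $\rho_n(u_n) = \hat u_n$, obtained from a compact projection $r_n$ orthogonal to $p_n, q_n$ together with a partial isometry $w_n \in \mathcal{K}(H_n)$ satisfying $w_n^* w_n = p_n \oplus r_n$ and $w_n w_n^* = q_n \oplus r_n$. The point to watch is that these lifts are not canonical: any two unitary lifts of $\hat u_n$ differ by a unitary in $1 + \mathcal{K}(H_n)$, so a priori $(\pi_n)_*(u_{n+1}) \neq u_n$ and the family $(u_n)_n$ need not define an element of $\varprojlim_n \mathcal{L}(H_n)$.

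The crux of the argument is therefore to choose the lifts compatibly, and this is where I expect the main obstacle to lie. I would proceed by induction: assuming coherent unitary lifts $u_1, \dots, u_n$ have been constructed, take any unitary lift $u'_{n+1}$ of $\hat u_{n+1}$ from Remark \ref{prop1.5}. Then $(\pi_n)_*(u'_{n+1})$ and $u_n$ are two unitary lifts of $\hat u_n$, so $z_n := (\pi_n)_*(u'_{n+1})^* u_n$ is a unitary in $1 + \mathcal{K}(H_n)$; I would then lift $z_n$ through the surjection $(\pi_n)_* : \mathcal{K}(H_{n+1}) \to \mathcal{K}(H_n)$ to a unitary $\tilde z_{n+1} \in 1 + \mathcal{K}(H_{n+1})$ and set $u_{n+1} = u'_{n+1}\tilde z_{n+1}$, which satisfies $(\pi_n)_*(u_{n+1}) = u_n$ while remaining a lift of $\hat u_{n+1}$. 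Surjectivity of $(\pi_n)_*$ guarantees that any unitary in $U_0(1 + \mathcal{K}(H_n))$ lifts, by lifting the generating self-adjoint compacts through exponentials, so the genuine difficulty is to arrange $z_n \in U_0(1 + \mathcal{K}(H_n))$: its class in $U(1+\mathcal{K}(H_n))/U_0 \cong {\rm K}_1(A_n)$ is the obstruction, and killing it requires exploiting the freedom in the choices of $r_{n+1}, w_{n+1}$ (and, if necessary, revising earlier choices), which is exactly where the Mittag-Leffler bookkeeping of the system $\{{\rm K}_1(A_n)\}$ enters. Once compatible lifts are in hand, the family $(u_n)_n$ defines a unitary $u \in \varprojlim_n \mathcal{L}(H_n) = \mathcal{L}(H)$ with $\rho(u) = \hat u$, since $\rho_n(u_n) = \hat u_n$ for every $n$. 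Alternatively, this whole reassembly can be bypassed by appealing directly to \cite[Lemma 1.11]{Phillips89k}, where the corresponding lifting is carried out intrinsically within representable K-theory.
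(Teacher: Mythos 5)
Your closing sentence is, in fact, the paper's entire proof: the paper gives no argument for Lemma \ref{lemma1} beyond the one-line assertion that it ``follows from Remark \ref{prop1.5}, or alternatively from \cite[Lemma 1.11]{Phillips89k}.'' So your final appeal to Phillips' Lemma 1.11 coincides exactly with the paper's (alternative) route and is a legitimate proof. The explicit levelwise-reassembly argument you attempt first is a genuinely different, more constructive route --- and to your credit you locate precisely where it strains --- but as written it has a real gap at that spot, and the gap is not ``bookkeeping'': it is the point where the full hypothesis must be used, and your sketch never uses it.

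Concretely: the only consequence of ``trivial class in ${\rm RK}_0(A)$'' that your argument extracts is levelwise --- each $\hat u_n$ has index zero in ${\rm K}_0(A_n)$. That is strictly weaker than $[\hat u]=0$. By the Milnor sequence (Theorem \ref{philllimit}), the kernel of ${\rm RK}_0(A) \to \varprojlim_{n} {\rm K}_0(A_n)$ is $\varprojlim_{n}^{1}\, {\rm K}_1(A_n)$, which can be nonzero and even uncountable. A unitary $\hat u$ whose class is a nonzero element of this subgroup satisfies all of your levelwise hypotheses --- every $\hat u_n$ lifts to a unitary in $\mathcal{L}(H_n)$ via Remark \ref{prop1.5} --- yet $\hat u$ admits no unitary lift at all: since $U\mathcal{L}(H)$ is path connected (\cite[Lemma 1.9]{Phillips89k}, quoted after Lemma \ref{JanichInj}), any unitary lift would place $\hat u$ in $U_0\mathcal{Q}(H)$, i.e.\ force trivial class. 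Hence no amount of freedom in the choices of $r_{n+1}$, $w_{n+1}$, nor revision of earlier stages, can close your induction in general: your obstruction classes $[z_n]\in {\rm K}_1(A_n)$ assemble into a class in $\varprojlim_{n}^{1}\,{\rm K}_1(A_n)$, and they can be simultaneously trivialized by re-choosing lifts precisely when that class vanishes. Its vanishing must be deduced from the unused part of the hypothesis, namely that $\hat u$ is actually joined to $1$ by a path in $U\mathcal{Q}(H)$ --- not from the Mittag-Leffler condition, which is the wrong tool here: under Mittag-Leffler the $\varprojlim^{1}$-term is automatically zero (that is what Proposition \ref{MittLeff} assumes), but it is simply unavailable in general, and when it fails your scheme would prove the false statement that levelwise-index-zero unitaries always lift. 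Making the direct route rigorous therefore amounts to re-proving the exactness of the Milnor sequence at the $\varprojlim^{1}$-spot, i.e.\ the relevant content of \cite[Lemma 1.11]{Phillips89k} itself; your fallback citation is the correct fix, and it is what the paper does.
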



If $u$ is a unitary element in $\mathcal{Q}(H)$, we write $[u]$
for its class in ${\rm RK}_0(A)$.
We equip the set $\mathcal{KC}(H)$ with the topology which is
generated by the operator-seminorm $\| \cdot \|_n$. By $[ \mathcal{KC}(H)]$ we refer to the set of
path components in $\mathcal{KC}(H)$. The equivalence classes are also denoted by $[ \cdot]$ and
if $t,s \in \mathcal{KC}(H)$ we define the product $[st]: = [s] \ [t]$.
The multiplication is well defined and makes the set
$ [\mathcal{KC}(H)]$ into an abelian semigroup just as $ \mathcal{KC}(H)$.

\begin{lemma}\label{ati4}
The set of homotopy classes of $\mathcal{KC}(H)$ with the above operation is
isomorphic to the abelian group ${\rm RK}_0( A)$.
\end{lemma}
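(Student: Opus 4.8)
The plan is to define a map $\Phi : [\mathcal{KC}(H)] \to {\rm RK}_0(A)$ by $\Phi([T]) = [\rho(T)]$, the class of the unitary $\rho(T)$ in $U\mathcal{Q}(H)/U_0\mathcal{Q}(H) = {\rm RK}_0(A)$, and to prove that $\Phi$ is a bijective homomorphism of semigroups; since ${\rm RK}_0(A)$ is a group, this forces $[\mathcal{KC}(H)]$ to be a group and $\Phi$ to be a group isomorphism. First I would check that $\Phi$ is well defined: the quotient map $\rho : \mathcal{L}(H) \to \mathcal{Q}(H)$ is continuous for the seminorm topologies, so a path in $\mathcal{KC}(H)$ joining $T$ to $S$ is carried by $\rho$ to a path in $U\mathcal{Q}(H)$ joining $\rho(T)$ to $\rho(S)$; hence $\rho(T)\rho(S)^{*} \in U_0\mathcal{Q}(H)$ and $[\rho(T)] = [\rho(S)]$. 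That $\Phi$ is a homomorphism is immediate from $\rho(ST) = \rho(S)\rho(T)$ together with the definition $[S][T] := [ST]$ of the product, and $\Phi([1]) = [\rho(1)] = 0$.

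Surjectivity is the easy half. Given a class in ${\rm RK}_0(A)$, represent it by a unitary $u \in U\mathcal{Q}(H)$ and apply Lemma \ref{Mingo1} to lift $u$ to a partial isometry $v \in \mathcal{L}(H)$ with $\rho(v) = u$. Since $\rho(v)$ is unitary, $v \in \mathcal{KC}(H)$ and $\Phi([v]) = [u]$, so every class is attained.

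The heart of the argument is injectivity, and here I would first record two homotopy moves inside $\mathcal{KC}(H)$. If $\rho(T') = \rho(T)$ then $T - T' \in \mathcal{K}(H)$ and the straight-line path $T' + s(T-T')$ stays in $\mathcal{KC}(H)$, its image under $\rho$ being the constant unitary $\rho(T)$; thus any two lifts of the same unitary are homotopic. Secondly, if $u_t$ is a path in $U\mathcal{L}(H)$ and $S \in \mathcal{KC}(H)$, then $u_t S$ is a path in $\mathcal{KC}(H)$, since $\rho(u_t S) = \rho(u_t)\rho(S)$ is a product of unitaries. Now suppose $\Phi([T]) = \Phi([S])$, i.e. $\rho(T)\rho(S)^{*} \in U_0\mathcal{Q}(H)$; by Lemma \ref{lemma1} this unitary, having trivial class in ${\rm RK}_0(A)$, lifts to a genuine unitary $u \in U\mathcal{L}(H)$. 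Then $\rho(uS) = \rho(T)\rho(S)^{*}\rho(S) = \rho(T)$, so $uS$ and $T$ are two lifts of $\rho(T)$ and are homotopic in $\mathcal{KC}(H)$ by the first move. If I can join $u$ to $1$ by a path in $U\mathcal{L}(H)$, the second move gives $S \simeq uS \simeq T$, so $[S] = [T]$.

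The whole proof therefore reduces to the path-connectedness of the unitary group $U\mathcal{L}(H)$ of the standard module, and this is the step I expect to be the main obstacle. Over a C*-algebra the Kuiper-type theorem of Mingo guarantees that each $U\mathcal{L}(H_n)$ is contractible, so it has vanishing $\pi_0$ and $\pi_1$; writing $U\mathcal{L}(H) = \varprojlim_n U\mathcal{L}(H_n)$ (the unitary part of the identification $\mathcal{L}(H) \cong \varprojlim_n \mathcal{L}(H_n)$ underlying Lemma \ref{Fred1}) and feeding the vanishing of these $\varprojlim$ and $\varprojlim^{1}$ groups into the Milnor $\varprojlim^{1}$ sequence for the homotopy of a tower of spaces yields $\pi_0(U\mathcal{L}(H)) = 0$. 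The delicate point is to present the tower $\{U\mathcal{L}(H_n)\}$ so that this sequence applies: since the connecting maps are induced by the surjective morphisms $(\pi_n)_{*}$ and admit local exponential sections (self-adjoint lifting near the identity), the tower can be treated as one of fibrations. With path-connectedness of $U\mathcal{L}(H)$ in hand, injectivity follows, and hence the desired isomorphism $[\mathcal{KC}(H)] \cong {\rm RK}_0(A)$.
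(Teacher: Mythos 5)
Your proposal is correct, and its skeleton coincides with the paper's: the same map $[t]\mapsto[\rho(t)]$, Lemma \ref{Mingo1} (lifting unitaries of $\mathcal{Q}(H)$ to partial isometries) for surjectivity, and Lemma \ref{lemma1} together with path-connectedness of $U\mathcal{L}(H)$ for injectivity; your ``translate $S$ by a lifted unitary $u$'' move is just a reformulation of the paper's trivial-kernel argument combined with its observation that $[t][t^*]=[tt^*]=[1]$ forces the group structure. You genuinely diverge in two places. First, you obtain the homomorphism property for free from multiplicativity of $\rho$ and the quotient-group structure of ${\rm RK}_0(A)=U\mathcal{Q}(H)/U_0\mathcal{Q}(H)$, whereas the paper runs the rotation path connecting $s\oplus t$ to $st\oplus 1$ and invokes stabilization; the rotation argument buys the identification of the multiplicative operation with the direct-sum (additive) picture and commutativity, while your observation is the shorter route to multiplicativity alone, so both are legitimate. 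Second, for the key connectivity fact the paper simply cites \cite[Lemma 1.9]{Phillips89k}, while you re-derive it: each $U\mathcal{L}(H_n)$ is contractible by the Kuiper-type theorem over C*-algebras, and you feed $\pi_0=\pi_1=0$ termwise into the Milnor $\varprojlim^1$ sequence for the tower $\{U\mathcal{L}(H_n)\}$ with $U\mathcal{L}(H)=\varprojlim_n U\mathcal{L}(H_n)$. This is self-contained but is also the one step you should flesh out: the homotopy Milnor sequence needs the tower to consist of fibrations, so beyond the surjectivity of $(\pi_n)_*$ (which is part of the paper's standing setup for the standard module) you need an honest continuous local section near the identity --- for instance a Bartle--Graves continuous section of the surjection $\mathcal{L}(H_{n+1})\to\mathcal{L}(H_n)$, symmetrized so self-adjoints lift to self-adjoints, conjugated by $\exp$ and $\log$ on a neighborhood of $1$; local sections then make each connecting map a fiber bundle over a metrizable (hence paracompact) base, so a fibration, and your conclusion $\pi_0(U\mathcal{L}(H))=0$ follows. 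With that detail supplied your argument closes; alternatively, as the paper itself notes after Lemma \ref{JanichInj}, the connectivity statement is exactly \cite[Lemma 1.9]{Phillips89k} and can be cited outright.
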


\begin{proof}
Suppose $\rho : \mathcal{L}(H) \to
 \mathcal{Q}(H)$ is the quotient map, we define
$ \Gamma : [\mathcal{KC}(H)] \to {\rm RK}_0(A) $,  $[t] \mapsto [ \rho(t) ]$. Then $\Gamma$ is a
homomorphism. To see this, we assume $s$ and $t$ are in $\mathcal{KC}(H)$ and define
the well known path
\begin{equation*} 
w_{ \theta} : = \begin{bmatrix} s & 0 \\ 0 & 1 \end{bmatrix} \,
\begin{bmatrix} {\rm cos} \frac{\pi \theta}{2} & -{\rm sin} \frac{\pi \theta}{2} \\
{\rm sin} \frac{\pi \theta}{2} & {\rm cos} \frac{\pi \theta}{2} \end{bmatrix} \,
\begin{bmatrix} t & 0 \\ 0 & 1 \end{bmatrix} \,
\begin{bmatrix} {\rm cos} \frac{\pi \theta}{2} & {\rm sin} \frac{\pi \theta}{2} \\
-{\rm sin} \frac{\pi \theta}{2} & {\rm cos} \frac{\pi \theta}{2} \end{bmatrix}.
\end{equation*}
Then $\rho (w_{ \theta})$ is in $U \mathcal{Q}(H \oplus H)$ for all $ \theta \in [0,1]$ and so
$ \theta \mapsto w_{ \theta}$ is a continuous path of essentially unitary operators which connects
$ s \oplus t$ to $st \oplus 1$. The result
now follows from the stabilization and the fact that $ \Gamma( [s \oplus t]) = \Gamma([s]) + \Gamma([t])$.

Suppose $[ \rho(t)]=0$ and $t \in \mathcal{KC}(H)$. Then, by Lemma \ref{lemma1},
there is $k \in \mathcal{K}(H)$ such that $u=t+k$ is unitary. So $\theta \mapsto t+ \theta k$, for $\theta \in
[0,1]$, is a path of essentially unitary operators connecting $t$ to $u$.
Utilizing \cite[Lemma 1.9]{Phillips89k}, there is a path of unitary operators connecting $u$ to $1$.
If $[t] \in [\mathcal{KC}(H)]$, the above argument and the fact that
$$\Gamma([t]) + \Gamma([t^*])= \Gamma([t \, t^*])= [ \rho(t \, t^*)]=0,$$
imply $[t] \, [t^*] = [t \, t^*]= [1]$. Thus $[ \mathcal{KC}]$ is an abelian group
and $\Gamma$ is an isomorphism.
\end{proof}

\begin{lemma}\label{Janich}
Suppose that $A$ and $B$ are unital $\sigma$-C*-algebras and $t \in \mathcal{KC}(H_{A \otimes B})$.
Then there exists $z \in \mathcal{KC}(H_{A \otimes B}) \cap A \otimes \mathcal{L}(H_B)$
such that $[t] = [z]$.
\end{lemma}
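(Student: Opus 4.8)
The plan is to replace $t$ first by a partial isometry and then by an explicit element of the subalgebra $A\otimes\mathcal{L}(H_B)$ carrying the same index data, and finally to join the two by a path inside $\mathcal{KC}(H_{A\otimes B})$. First I would reduce to a partial isometry: since $t\in\mathcal{KC}(H_{A\otimes B})$, the element $\rho(t)$ is a unitary in $\mathcal{Q}(H_{A\otimes B})$, so by Lemma \ref{Mingo1} it lifts to a partial isometry $v\in\mathcal{L}(H_{A\otimes B})$ with $\rho(v)=\rho(t)$. Then $t-v\in\mathcal{K}(H_{A\otimes B})$ and the straight-line path $\theta\mapsto(1-\theta)t+\theta v$ has constant image $\rho(t)$ under $\rho$, hence lies in $\mathcal{KC}(H_{A\otimes B})$; thus $[t]=[v]$. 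Put $p:=1-v^{*}v$ and $q:=1-vv^{*}$, which are projections in $\mathcal{K}(H_{A\otimes B})=(A\otimes B)\otimes\mathcal{K}$.

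Next I would manufacture the representative inside the subalgebra. Applying Lemma \ref{Mingo2} with $A\otimes B$ in place of its hypothesised unital algebra, I obtain $w_{p},w_{q}\in(A\otimes B)\otimes\mathcal{L}$ with $w_{p}^{*}w_{p}=w_{q}^{*}w_{q}=1$, $1-w_{p}w_{p}^{*}=p$ and $1-w_{q}w_{q}^{*}=q$. Since $(A\otimes B)\otimes\mathcal{L}=A\otimes(B\otimes\mathcal{L})$ and $B\otimes\mathcal{L}\subseteq M(B\otimes\mathcal{K})=\mathcal{L}(H_B)$, both $w_{p}$ and $w_{q}$, and hence $z:=w_{q}w_{p}^{*}$, lie in $A\otimes\mathcal{L}(H_B)$. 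A direct computation gives $z^{*}z=w_{p}w_{p}^{*}=1-p$ and $zz^{*}=w_{q}w_{q}^{*}=1-q$, so $z$ is a partial isometry with the same initial and final projections as $v$, while $\rho(z)=\rho(w_{q})\rho(w_{p})^{*}$ is unitary; therefore $z\in\mathcal{KC}(H_{A\otimes B})\cap A\otimes\mathcal{L}(H_B)$.

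It then remains to prove $[v]=[z]$. Because $v$ and $z$ are partial isometries with $v^{*}v=z^{*}z=1-p$ and $vv^{*}=zz^{*}=1-q$, the operator $u_{0}:=v^{*}z+p$ is a unitary in $\mathcal{L}(H_{A\otimes B})$ (using $vp=0$ and $qz=0$ one checks $u_{0}^{*}u_{0}=u_{0}u_{0}^{*}=1$ and $vu_{0}=z$). If $u_{0}$ can be joined to $1$ by a path $\theta\mapsto u_{\theta}$ in the unitary group of $\mathcal{L}(H_{A\otimes B})$, then $\theta\mapsto vu_{\theta}$ is a path in $\mathcal{KC}(H_{A\otimes B})$ (its image under $\rho$ is the product of the unitaries $\rho(v)$ and $\rho(u_{\theta})$) running from $v$ to $z$, giving $[v]=[z]$ and hence $[t]=[z]$, as required.

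The main obstacle is precisely this last connectivity step. Over a C*-algebra one invokes the contractibility of the unitary group of $\mathcal{L}(H)$ (a Kuiper-type theorem, as in \cite{MIN}), but over a $\sigma$-C*-algebra the identity component of a unitary group need not even be closed \cite[Example 3.7]{Phillips89k}, so connectivity is not automatic. I would resolve this by passing to the inverse limit: under the homeomorphism of Lemma \ref{Fred1} one has $\mathcal{L}(H_{A\otimes B})\cong\varprojlim_{n}\mathcal{L}(H_{A_n\otimes B_n})$, each factor admits a Kuiper-type contraction of its unitary group, and the task reduces to choosing the contractions of $u_{0}$ compatibly with the surjective connecting maps $(\pi_{n})_{*}$ so that they assemble into a single path into the limit. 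Arranging this compatibility, for instance by a path-lifting argument resting on the surjectivity of the connecting $*$-homomorphisms, is the delicate point and is where the $\sigma$-structure does the real work.
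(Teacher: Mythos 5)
Your proposal is correct, and its first two steps coincide with the paper's: you lift $\rho(t)$ to a partial isometry $v$ by Lemma \ref{Mingo1} (your straight-line path from $t$ to $v$, with constant image under $\rho$, makes explicit what the paper disposes of with ``$v$ is a compact perturbation of $t$''), and you apply Lemma \ref{Mingo2} to $p=1-v^*v$ and $q=1-vv^*$ in $(A\otimes B)\otimes\mathcal{K}$ to get isometries in $(A\otimes B)\otimes\mathcal{L}\subseteq A\otimes\mathcal{L}(H_B)$; your $z=w_qw_p^*$ is the adjoint of the paper's $z=w_1w_2^*$, and your identities $z^*z=v^*v$, $zz^*=vv^*$ match. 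You part ways at the homotopy $[v]=[z]$. The paper dilates: it places $v$ and $z^*$ as the $(1,1)$ corners of explicit unitaries $u,\mu \in U\mathcal{L}(H_{A\otimes B}\oplus H_{A\otimes B})$, joins $u$ to $\mu$ by a path in that path-connected unitary group, and extracts the corner entries. You instead form the unitary $u_0=v^*z+p$ satisfying $vu_0=z$ (your computations, using $vp=0$, $zp=0$, $qz=0$, are correct) and translate a unitary path from $1$ to $u_0$ by $v$. Your variant actually buys something the paper's corner argument leaves implicit: since $\rho(vu_\theta)=\rho(v)\rho(u_\theta)$ is unitary for every $\theta$, your connecting path visibly stays inside $\mathcal{KC}(H_{A\otimes B})$, which is what the later application (classes in $[\mathcal{KC}]$, as in Proposition \ref{ati44}) requires, whereas the $(1,1)$ corner of an arbitrary unitary path need not be essentially unitary.

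The one defect is your final paragraph: the path-connectedness of the unitary group of $\mathcal{L}(H_{A\otimes B})$ is not an unresolved delicate point calling for a new compatible-contraction or path-lifting construction; it is exactly \cite[Lemma 1.9]{Phillips89k}, which this paper already invokes in the proof of Lemma \ref{ati4} (to connect a unitary to $1$) and uses again, for $U\mathcal{L}(H_{A\otimes B}\oplus H_{A\otimes B})$, inside its own proof of the present lemma. Phillips' warning \cite[Example 3.7]{Phillips89k} about $U_0A$ concerns a general unital $\sigma$-C*-algebra $A$, not the Kuiper-type algebra $\mathcal{L}(H)\cong M((A\otimes B)\otimes\mathcal{K})$, for which connectivity survives the countable inverse limit; your sketch --- contractible unitary groups $U\mathcal{L}(H_n)$ at each C*-level, surjective connecting maps, and a $\varprojlim^1/\pi_0$ argument --- is in substance how such statements are established in \cite{Phillips89k}, so nothing new is needed. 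Replacing that paragraph by the citation closes your proof.
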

\begin{proof}
Using Lemma \ref{Mingo1}, there exists a partial isometry $v$ in $\mathcal{L}(H_{A \otimes B})$ such that $\rho(t)=\rho(v)$.
Then $1-v^* v$ and $1-v v^*$ are projections in $\mathcal{K}(H_{ A \otimes B})$ and so, by Lemma \ref{Mingo2},
there are $w_1$ and $ w_2$ in $ A \otimes B
\otimes \mathcal{L} \subset A \otimes M( B \otimes \mathcal{K}) \cong A \otimes \mathcal{L}(H_{B})$ such that
$$w_{1}^* w_1=1, ~ 1-w_1 w_{1}^{*} = 1-v^*v, ~ w_{2}^* w_2=1 ~{\rm and}~ 1-w_2 w_{2}^{*} = 1-v v^*.$$
Let $z=w_1 w_{2}^{*}$ then $z$ is in the subalgebra $A \otimes \mathcal{L}(H_{B})$ of $ \mathcal{L}(H_{A \otimes B})$
and $v^*v=zz^*$ and $vv^*=z^*z$. Since $U  \mathcal{L}(H_{A \otimes B} \oplus H_{A \otimes B})$
is path connected and
$$ u = \begin{bmatrix} v & 1-vv^* \\ -(1-v^*v) &
v^* \end{bmatrix} $$ and
$$\mu = \begin{bmatrix} z^* & 1-z^* z \\ -(1-z z^*) &
z  \end{bmatrix}$$
are unitaries in $ \mathcal{L}(H_{A \otimes B} \oplus H_{A \otimes B})$,
we can find a continuous path
$ \theta \mapsto u_{ \theta}= \left[ \begin{smallmatrix} (u_{ \theta})_{11} & (u_{ \theta})_{12} \\ (u_{ \theta})_{21} &
(u_{ \theta})_{22}   \end{smallmatrix} \right]$ 
in $ U  \mathcal{L}(H_{A \otimes B} \oplus H_{A \otimes B}) $ such that $u_0=u$ and $u_1= \mu$. Therefore,
$  \theta \mapsto (u_{ \theta})_{11}$ is a continuous path from $(u_0)_{11}=v$ to $(u_1)_{11}=z^*$. We have therefore found
an element $z^* \in  A \otimes \mathcal{L}(H_B) \subset \mathcal{L}(H_{A \otimes B})$ with
$1 - zz^*, 1-z^*z \in \mathcal{K}(H_{A \otimes B})$ and $[v]=[z^*]$ in $\mathcal{L}(H_{A \otimes B})$.  Since
$v$ is a compact perturbation of $t$, we have $[t]=[v]=[z^*]$ in $\mathcal{L}(H_{A \otimes B})$.
\end{proof}

Let $X$ be a countably compactly generated space and let
$A$ be a unital $\sigma$-C*-algebra. Let $XA$ denote the $\sigma$-C*-algebra of
continuous functions from $X$ to $A$ and let $XH$ denote the Hilbert $XA$-module
consisting of continuous functions $X$ to $H$. The inner product on $XH$ is given by
$ \langle f,g   \rangle :x \mapsto \langle f(x), g(x) \rangle$ for $f,g \in XH$.
Then $\mathcal{K}(XH) \cong X \mathcal{K}(H)$ is the
canonical isomorphism of $\sigma$-C*-algebras and moreover, $XM( \mathcal{K}(H))$ can be regarded
as a $\sigma$-C*-subalgebra of $M( \mathcal{K}(XH))$.

\begin{lemma}\label{JanichInj}  {\rm (\cite[Theorem 10.2]{WEG})}
Suppose that $A$ and $B$ are unital $\sigma$-C*-algebras then
$$ {\rm RK}_0( M(A \otimes \mathcal{K}) \otimes B) = {\rm RK}_1( M(A \otimes \mathcal{K}) \otimes B) = \{0 \}.$$
In particular, if $X$ is a countably compactly generated space and $H=l^2(A)$, then
the unitary group of  $ X \mathcal{L}(H)$ is path connected.
\end{lemma}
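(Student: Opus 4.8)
The plan is to reduce the vanishing to an ordinary $C^{*}$-algebra computation and then promote it to the $\sigma$-$C^{*}$-setting through the Milnor $\varprojlim^{1}$ sequence of Theorem~\ref{philllimit}, finally reading off the path-connectedness of the unitary group as the special case $B=C(X)$. First I would exhibit $M(A\otimes\mathcal{K})\otimes B$ as a countable inverse limit of $C^{*}$-algebras: writing $A\cong\varprojlim_n A_n$ and $B\cong\varprojlim_n B_n$ with surjective connecting maps and combining $M(A\otimes\mathcal{K})\cong\varprojlim_n M(A_n\otimes\mathcal{K})$ with the tensor-product formula of Section~2, I get
\[ M(A\otimes\mathcal{K})\otimes B\;\cong\;\varprojlim_n C_n,\qquad C_n:=M(A_n\otimes\mathcal{K})\otimes B_n, \]
where every $C_n$ is an honest $C^{*}$-algebra (recall $M(A_n\otimes\mathcal{K})\cong\mathcal{L}(H_{A_n})$) and the connecting maps may be arranged surjective.

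Second, for each fixed $n$ I would establish $K_0(C_n)=K_1(C_n)=0$ along the lines of \cite[Theorem 10.2]{WEG}, by an Eilenberg swindle. Since $A_n\otimes\mathcal{K}$ is stable, $\mathcal{L}(H_{A_n})$ contains isometries $s_1,s_2,\dots$ with pairwise orthogonal ranges and $\sum_i s_is_i^{*}=1$ strictly, so the infinite-repeat $*$-endomorphism $\theta(x)=\sum_i s_i x s_i^{*}$ is unitarily equivalent, inside $\mathcal{L}(H_{A_n})$, to the orthogonal sum $\mathrm{id}\oplus\theta$. Applying $(-)\otimes\mathrm{id}_{B_n}$ — legitimate on the minimal tensor product because $B_n$ is unital and $\theta\otimes\mathrm{id}_{B_n}$ is again a $*$-endomorphism — preserves this unitary equivalence on $C_n$, so on $K_{*}(C_n)$ one reads $(\theta\otimes\mathrm{id})_{*}=\mathrm{id}_{*}+(\theta\otimes\mathrm{id})_{*}$, forcing $\mathrm{id}_{*}=0$ in both degrees. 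As $C_n$ is a $C^{*}$-algebra, $\mathrm{RK}_i(C_n)=K_i(C_n)=0$, and feeding this into
\[ 0\to\varprojlim_n{}^{1}\,\mathrm{RK}_{1-i}(C_n)\to\mathrm{RK}_i(\varprojlim_n C_n)\to\varprojlim_n\mathrm{RK}_i(C_n)\to 0 \]
annihilates both outer terms (the $\varprojlim$ and the $\varprojlim^{1}$ of a zero tower vanish), giving $\mathrm{RK}_0(M(A\otimes\mathcal{K})\otimes B)=\mathrm{RK}_1(M(A\otimes\mathcal{K})\otimes B)=0$.

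For the ``in particular'' clause I would take $B=C(X)$, which is a unital $\sigma$-$C^{*}$-algebra (the constant function $1$ is a unit) whenever $X$ is countably compactly generated. From $C(X,\mathcal{L}(H))\cong C(X)\otimes\mathcal{L}(H)$ and $\mathcal{L}(H)\cong M(A\otimes\mathcal{K})$ I get $X\mathcal{L}(H)\cong M(A\otimes\mathcal{K})\otimes C(X)$, so the first part yields $\mathrm{RK}_1(X\mathcal{L}(H))=0$. It remains to identify $\pi_0\big(U(X\mathcal{L}(H))\big)=U(X\mathcal{L}(H))/U_0(X\mathcal{L}(H))$ with $\mathrm{RK}_1(X\mathcal{L}(H))$: because $H\oplus\cdots\oplus H\cong H$ the algebra $X\mathcal{L}(H)$ is matrix-stable and properly infinite, so the matrix stabilization implicit in $\mathrm{RK}_1$ collapses and the canonical map $U(X\mathcal{L}(H))/U_0(X\mathcal{L}(H))\to\mathrm{RK}_1(X\mathcal{L}(H))$ is an isomorphism, exactly as $K_1(D)\cong U(D)/U_0(D)$ for properly infinite $C^{*}$-algebras $D$. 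Hence $U(X\mathcal{L}(H))=U_0(X\mathcal{L}(H))$ is path connected.

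I expect the obstacle to be two-fold. The structural heart is that the swindle endomorphism and its implementing unitary survive the tensor factor $B_n$ and act on the minimal tensor product $C_n$, so that the equivalence $\theta\simeq\mathrm{id}\oplus\theta$ tensors up intact; this is precisely what forces \emph{both} $K_0$ and $K_1$ to vanish rather than only one of them. The subtler point lies in the last identification $\pi_0(U)\cong\mathrm{RK}_1$ over a $\sigma$-$C^{*}$-algebra: one must verify that the proper-infiniteness argument collapsing matrix stabilization in the $C^{*}$-case persists, equivalently that the $\varprojlim^{1}$ of the groups $\pi_1(U(C_n))\cong K_0(C_n)$ obstructing $\pi_0$ of the inverse limit $\varprojlim_n U(C_n)=U(X\mathcal{L}(H))$ vanishes — which it does exactly because the swindle also kills $K_0(C_n)$, while $\pi_0(U(C_n))\cong K_1(C_n)=0$ handles the remaining term. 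A cleaner alternative for part~(b) would route through the generalized Kuiper theorem (each $U(\mathcal{L}(H_{A_n}))$ is contractible), but the Milnor-sequence argument stays within the machinery already in place.
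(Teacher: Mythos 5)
Your proposal is correct and follows the paper's own route: the same diagonal inverse-limit decomposition $M(A\otimes\mathcal{K})\otimes B\cong\varprojlim_{n}M(A_n\otimes\mathcal{K})\otimes B_n$ with surjective connecting maps, vanishing of the C*-level K-groups (which the paper simply cites as \cite[Theorem 10.2]{WEG}, while you reprove it by the standard Eilenberg swindle), and the Milnor sequence of Theorem \ref{philllimit} to annihilate both the $\varprojlim$ and $\varprojlim^{1}$ terms. Your handling of the ``in particular'' clause --- deducing path connectedness of $U(X\mathcal{L}(H))$ from the RK-vanishing via the $\varprojlim^{1}$ obstruction for the tower of unitary groups, with the generalized Kuiper theorem as an alternative --- also matches the paper, which derives it from ${\rm RK}_0(C(X)\otimes M(A\otimes\mathcal{K}))=\{0\}$ together with $M(A\otimes\mathcal{K})\cong\mathcal{L}(H)$ and explicitly notes the same Kuiper alternative via \cite[Lemma 1.9]{Phillips89k}.
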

\begin{proof} Suppose $A \cong \varprojlim_{n} A_n$ and $B \cong \varprojlim_{n} B_n$.
Then for all natural numbers $m$ and $n$ we have
$$ {\rm K}_0( M(A_n \otimes \mathcal{K}) \otimes B_m) = {\rm K}_1( M(A_n \otimes \mathcal{K}) \otimes B_m) = \{0 \}.$$
The first part now follows from the Milnor $\varprojlim_{n}^{1}$-sequence (\ref{ati1}) and the second part
follows from ${\rm RK}_0( C(X) \otimes M(A \otimes \mathcal{K})) = \{ 0 \}$ and the
isomorphism $ M(A \otimes \mathcal{K}) \cong \mathcal{L}(H)$.
\end{proof}
The second statement of the above lemma can also be deduced from the path connectivity of $U \mathcal{L}(H)$, cf.
\cite[Lemma 1.9]{Phillips89k}.

\begin{proposition} \label{ati44}
Let $X$ be a countably compactly generated space and let $H$ be the
standard Hilbert module over an arbitrary unital $\sigma$-C*-algebra $A$.
Then abelian groups $[X, \mathcal{KC}(H)]$ and ${\rm RK}_{0}(XA)$
are isomorphic.

\end{proposition}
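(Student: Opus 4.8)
The plan is to apply Lemma~\ref{ati4} to the unital $\sigma$-C*-algebra $XA=C(X,A)$ and then to compare the essentially unitary operators on the module $XH$ with those coming from the $\sigma$-C*-subalgebra $X\mathcal{L}(H)=C(X)\otimes\mathcal{L}(H)$. I first record the identifications to be used: since $C(X)\otimes A\cong XA$ and $H=l^2(A)$, one has $XH\cong l^2(XA)=H_{XA}$, so $XH$ is the standard Hilbert module over $XA$; moreover $\mathcal{K}(XH)\cong X\mathcal{K}(H)$ is given, whereas $X\mathcal{L}(H)$ is in general a \emph{proper} subalgebra of $\mathcal{L}(XH)$. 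A homotopy class of maps $X\to\mathcal{KC}(H)$ is the same as a path component of $C(X,\mathcal{KC}(H))=\mathcal{KC}(XH)\cap X\mathcal{L}(H)$, the essentially unitary operators on $XH$ lying in $X\mathcal{L}(H)$, which I abbreviate $X\mathcal{KC}(H)$; under the pointwise product it is an abelian semigroup, exactly as in the paragraph preceding Lemma~\ref{ati4}. I then define the semigroup homomorphism $\Phi\colon[X,\mathcal{KC}(H)]\to{\rm RK}_0(XA)$ by $\Phi([f])=[\rho(f)]$, where $\rho\colon\mathcal{L}(XH)\to\mathcal{Q}(XH)$ and the class is taken in ${\rm RK}_0(XA)=U\mathcal{Q}(XH)/U_0\mathcal{Q}(XH)$; equivalently $\Phi=\Gamma\circ\iota$ for the isomorphism $\Gamma$ of Lemma~\ref{ati4} applied to $XA$ and the map $\iota$ on path components induced by the inclusion $X\mathcal{KC}(H)\hookrightarrow\mathcal{KC}(XH)$. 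A homotopy of maps into $\mathcal{KC}(H)$ is precisely a path in $X\mathcal{KC}(H)\subseteq\mathcal{KC}(XH)$, so $\Phi$ is well defined, and the block-rotation argument of Lemma~\ref{ati4} performed pointwise in $x$ shows it respects the product. As in the proof of Lemma~\ref{ati4}, it then remains only to prove that $\Phi$ is onto and that $\Phi([f])=0$ forces $[f]=[1]$; these two facts yield that $[X,\mathcal{KC}(H)]$ is a group and that $\Phi$ is an isomorphism.

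For surjectivity I would invoke Lemma~\ref{Janich} with the two algebras taken to be $C(X)$ and $A$. Given a class in ${\rm RK}_0(XA)$, Lemma~\ref{ati4} for $XA$ represents it as $[\rho(t)]$ for some $t\in\mathcal{KC}(XH)=\mathcal{KC}(H_{C(X)\otimes A})$; Lemma~\ref{Janich} then produces $z\in\mathcal{KC}(XH)\cap(C(X)\otimes\mathcal{L}(H))=X\mathcal{KC}(H)$ with $[t]=[z]$ in $[\mathcal{KC}(XH)]$, so the given class equals $[\rho(z)]=\Phi([z])$. Hence $\Phi$ is onto.

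The heart of the matter is the implication $\Phi([f])=0\Rightarrow[f]=[1]$, and the decisive point is that $\mathcal{K}(XH)=X\mathcal{K}(H)$, so that a compact perturbation of an operator in $X\mathcal{L}(H)$ remains inside $X\mathcal{L}(H)$. Suppose $f\in X\mathcal{KC}(H)$ with $[\rho(f)]=0$ in ${\rm RK}_0(XA)$. Applying Lemma~\ref{lemma1} to $XA$, the unitary $\rho(f)\in\mathcal{Q}(XH)$ lifts to a unitary $u\in\mathcal{L}(XH)$ with $\rho(u)=\rho(f)$; then $k:=u-f\in\mathcal{K}(XH)=X\mathcal{K}(H)$, so in fact $u=f+k\in X\mathcal{L}(H)$ and is a unitary there. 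The straight-line path $\theta\mapsto f+\theta k$ has constant image $\rho(f)$ in $\mathcal{Q}(XH)$, hence lies in $X\mathcal{KC}(H)$ and joins $f$ to $u$. Finally, by Lemma~\ref{JanichInj} the group $U(X\mathcal{L}(H))$ is path connected, and $U(X\mathcal{L}(H))\subseteq X\mathcal{KC}(H)$, so $u$ can be joined to $1$ within $X\mathcal{KC}(H)$. Therefore $f$ is homotopic to the constant map $1$ and $[f]=[1]$.

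Combining these, $\Phi$ is a bijective semigroup homomorphism onto the abelian group ${\rm RK}_0(XA)$; exactly as in Lemma~\ref{ati4} this makes $[X,\mathcal{KC}(H)]$ an abelian group, with $[t]^{-1}=[t^*]$, and $\Phi$ a group isomorphism. I expect the injectivity step to be the main obstacle: one has to be sure the unitary lift furnished by Lemma~\ref{lemma1} does not escape the proper subalgebra $X\mathcal{L}(H)$, which is guaranteed precisely by the identification $\mathcal{K}(XH)=X\mathcal{K}(H)$, and one must then have the path connectivity of $U(X\mathcal{L}(H))$ from Lemma~\ref{JanichInj} in hand to contract the resulting genuine unitary to the identity.
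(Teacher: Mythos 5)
Your proposal is correct and follows essentially the same route as the paper's proof: you identify $[X,\mathcal{KC}(H)]$ with the path components of $\mathcal{KC}(XH)\cap X\mathcal{L}(H)$, get surjectivity from Lemma~\ref{Janich} together with the unitary equivalence $XH\cong H_{C(X)\otimes A}$, and get injectivity by lifting via Lemma~\ref{lemma1} applied to $XA$, using $\mathcal{K}(XH)\cong X\mathcal{K}(H)$ to keep the unitary inside $X\mathcal{L}(H)$ and Lemma~\ref{JanichInj} to contract it to $1$, exactly as in the paper. The only differences are presentational: the paper phrases the argument as injectivity and surjectivity of the inclusion-induced map $i_{*}$ and then invokes Lemma~\ref{ati4}, whereas you package the same steps into a single map $\Phi=\Gamma\circ\iota$ and make explicit the pointwise rotation homotopy for the homomorphism property, which the paper leaves implicit.
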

\begin{proof}
The inclusion map
$ \mathcal{KC}(XH) \cap X \mathcal{L}(H) \hookrightarrow \mathcal{KC}(XH)$ induces
the map $$i_{*}:[\mathcal{KC}(XH) \cap X \mathcal{L}(H)] \rightarrow [\mathcal{KC}(XH)].$$
Let $t \in \mathcal{KC}(XH) \cap X \mathcal{L}(H)$ and
$i_{*}([t])=[1]$, then the class of $\rho(t)$ in ${\rm RK}_0(XA)$ is trivial.
Using Lemma \ref{lemma1}, there is a
$k \in \mathcal{K}(XH) \cong X \mathcal{K}(H)$ such that $u=t + k$ is
a unitary element in $ X \mathcal{L}(H)$. Hence, $t$ can be connected to $u$ in
$\mathcal{KC}(XH) \cap X \mathcal{L}(H)$, and since $U X \mathcal{L}(H)$ is path connected,
$u$ can be connected to $1$ in $\mathcal{KC}(XH) \cap X \mathcal{L}(H)$. Consequently, $[t]=[1]$.
As Lemma \ref{ati4}, $[\mathcal{KC}(XH) \cap X \mathcal{L}(H)]$ is an
abelian group and consequently, $i_{*}$ is injective. Surjectivity of $i_{*}$
immediately follows from Lemma \ref{Janich} and the fact that the
Hilbert $XA$-modules $XH$ and $H_{C(X) \otimes A}$ are unitarily equivalent. In view of
Lemma \ref{ati4} and the isomorphism
$\mathcal{K}(XH) \cong X \mathcal{K}(H)$, we have
$$[X, \mathcal{KC}(H)] = [\mathcal{KC}(XH) \cap X \mathcal{L}(H)] \cong [\mathcal{KC}(XH)] \cong {\rm RK}_{0}(XA).$$

\end{proof}

A bounded adjointable operator
$t \in \mathcal{L}(H)$ has polar decomposition if and only if $\overline{t \, H}$ and
$\overline{t^* \, H}$ are orthogonal direct summands. In particular, $t$ has polar decomposition if $t \, H$
is closed.  An operator $t$ has the polar decomposition
$v|t|$ if and only if its adjoint $t^*$ has
the polar decomposition $v^*|t^*|$, see e.g. \cite{FS2, JoitaBook1, SHAMal, WEG}.

\begin{lemma} \label{ati56}
Let $A$ be a unital $\sigma$-C*-algebra and let $f$ be a Fredholm operator on $H=l^2(A)$.
Then there exists a compact perturbation $g$ of $f$
that can be polar decomposed $g=v |g|$ for a partial isometry $v$ with
$p= 1- v^*v \in \mathcal{K}(H)$ and $q=1-vv^* \in \mathcal{K}(H)$.
\end{lemma}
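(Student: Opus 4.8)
The plan is to reduce the statement to the single assertion that $f$ admits a compact perturbation with closed range, and then to let the polar decomposition and the compactness of the defect projections come for free. Suppose we have produced $g\in\mathcal{L}(H)$ with $g-f\in\mathcal{K}(H)$ and $gH$ closed. By the criterion recalled just before the lemma, $g$ then has a polar decomposition $g=v|g|$ with $v$ a partial isometry; since $gH$ is closed, the range projections of $g$ and of $|g|=(g^*g)^{1/2}$ exist in $\mathcal{L}(H)$, and $v^*v$ is the range projection of $|g|$ while $vv^*$ is that of $g$. Because $g$ is a compact perturbation of $f$ we have $\rho(g)=\rho(f)$, which is invertible in $\mathcal{Q}(H)$; hence $\rho(|g|)=|\rho(f)|$ is positive and invertible and $\rho(v)=\rho(f)\,|\rho(f)|^{-1}$ is a unitary in $\mathcal{Q}(H)$. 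Consequently $\rho(1-v^*v)=0$ and $\rho(1-vv^*)=0$, so that $p:=1-v^*v$ and $q:=1-vv^*$, being self-adjoint idempotents mapping to $0$ in $\mathcal{Q}(H)$, lie in $\mathcal{K}(H)$. Thus the whole lemma rests on the closed-range perturbation.

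For a single C*-algebra this is classical. Writing $a=f^*f$, the element $\rho(a)$ is positive and invertible in the Calkin algebra, so there is a uniform $\delta>0$ with $\sigma(\rho(a))\subseteq[\delta,\infty)$; choosing a continuous $\phi\colon[0,\infty)\to[0,\infty)$ with $\phi\equiv 0$ on $[0,\delta/2]$ and $\phi(t)=t$ for $t\ge\delta$, the operator $\phi(a)$ has closed range and $\phi(a)-a\in\mathcal{K}$, and the standard manipulation of Mingo \cite{MIN} (lifting the polar part of $\rho(f)$ to a partial isometry as in Lemma \ref{Mingo1}) turns this into a closed-range compact perturbation of $f$ itself. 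To pass to a $\sigma$-C*-algebra $A\cong\varprojlim_n A_n$, I would invoke Lemma \ref{Fred1}, which identifies $f$ with a coherent family $(f_n)_n\in\varprojlim_n\mathcal{F}(H_n)$ through the homeomorphism $\mathcal{F}(H)\cong\varprojlim_n\mathcal{F}(H_n)$, and apply the C*-algebra case to each Fredholm $f_n$ on $H_n=l^2(A_n)$.

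The hard part will be coherence. Invertibility of $\rho(f)$ in $\mathcal{Q}(H)\cong\varprojlim_n\mathcal{Q}(H_n)$ only says that each $\rho_n(f_n)$ is invertible, with lower bounds $\delta_n$ that may tend to $0$; there is no uniform spectral gap, so a single continuous cut-off applied to $f^*f\in\mathcal{L}(H)$ cannot simultaneously create closed range at every level and keep the perturbation compact. The level-wise perturbations $g_n$ are therefore not automatically compatible with the connecting maps $(\pi_n)_*\colon\mathcal{L}(H_{n+1})\to\mathcal{L}(H_n)$. I would remedy this by an inductive construction: given $g_n=f_n+k_n$ with $k_n\in\mathcal{K}(H_n)$ and $g_nH_n$ closed, I first lift $k_n$ through the surjection $(\pi_n)_*\colon\mathcal{K}(H_{n+1})\to\mathcal{K}(H_n)$ (surjective since $\mathcal{K}(H)\cong\varprojlim_n\mathcal{K}(H_n)$) to obtain $f_{n+1}+\tilde k_{n+1}$ reducing to $g_n$, and then correct it by a further compact in $\ker(\pi_n)_*$, choosing the spectral cut-off at level $n+1$ finer than the gap already realized at level $n$, so as to create closed range without disturbing the level-$n$ reduction. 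The coherent family $(g_n)_n$ then assembles, via $\mathcal{L}(H)\cong\varprojlim_n\mathcal{L}(H_n)$, to an operator $g\in\mathcal{L}(H)$ with $\rho(g)=\rho(f)$, hence $g-f\in\mathcal{K}(H)$ and with closed range at every level; together with Lemma \ref{Fred1} this delivers the desired closed-range perturbation, and the first paragraph finishes the proof.

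A coherent by-product worth recording, which isolates exactly where the level dependence is unavoidable, is the following: with $v$ the partial isometry lift of the polar part of $\rho(f)$ from Lemma \ref{Mingo1} and $c=\tfrac12(v^*f+f^*v)$ self-adjoint, the globally defined function $t\mapsto\max(t,0)$ produces a positive operator $c_+$ that is a compact perturbation of $v^*f$ and is automatically compatible with all connecting maps, since continuous functional calculus commutes with the $*$-homomorphisms $(\pi_n)_*$. Positivity and a compact perturbation of $f$ are thus obtained for free and coherently; the only genuinely level-dependent step, and the crux of the argument, is the creation of a spectral gap at $0$ to force closed range.
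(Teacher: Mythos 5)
Your overall route is the same as the paper's: reduce to the levelwise statement via Lemma \ref{Fred1}, invoke Mingo's Proposition 1.7 on each $H_n$, assemble through $\mathcal{L}(H)\cong\varprojlim_n\mathcal{L}(H_n)$, and read off compactness of $p$ and $q$ from the fact that $\rho(v)=\rho(f)\,|\rho(f)|^{-1}$ is unitary. Where you differ is that you take the coherence of the levelwise data seriously, and rightly so: the paper's proof simply declares that the triples $(g_n,v_n,k_n)$ supplied by Mingo's proposition ``are'' coherent sequences, whereas independent applications of Mingo's construction at each level need not be compatible with $(\pi_n)_{*}$, since the one non-canonical ingredient---the spectral cut-off parameter---depends on $n$, and the essential gaps $\delta_n$ may shrink to $0$ (invertibility of $\rho(f)$ in $\mathcal{Q}(H)\cong\varprojlim_n\mathcal{Q}(H_n)$ gives no uniform lower bound, exactly as you say). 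Your inductive lift-and-correct scheme is the right repair, and your closing observation---that the positivization $c_+$ of $\tfrac12(v^*f+f^*v)$ is coherent for free because continuous functional calculus commutes with the connecting morphisms---isolates the genuinely level-dependent step correctly. On this point your write-up is more careful than the paper's own proof.

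One step would, however, fail as written: the claim that $\phi(a)$ has closed range when $\phi$ vanishes on $[0,\delta/2]$ and equals the identity on $[\delta,\infty)$. Closed range for a positive element requires $0$ to be isolated in $\sigma(\phi(a))=\phi(\sigma(a))$, and an essential-spectrum gap gives no gap in the full spectrum: for $A=C[0,1]$ and $a=\operatorname{diag}(h,1,1,\dots)$ with $h(t)=t$, one has $\sigma_{\mathrm{ess}}(a)=\{1\}$ but $\sigma(a)=[0,1]$, so $\phi(\sigma(a))$ is an interval with endpoint $0$ and the range of $\phi(a)$ is not closed. Mingo's actual manipulation sidesteps this: after lifting the unitary polar part of $\rho(f)$ to a partial isometry $v$ (Lemma \ref{Mingo1}) and forming $c_+$, one sets $b=\max(c_+,\delta/2)$, which is positive and \emph{invertible}---not merely closed-range---because $\rho(c_+)=|\rho(f)|\ge\delta$; then $g=vb$ satisfies $gH=vH$ closed and $\rho(g)=\rho(f)$. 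The same correction makes your inductive step sound: choosing $\mu_{n+1}$ below both the essential gap at level $n+1$ and the lower bound of the invertible $b_n$ already constructed, the element $\max(\,\cdot\,,\mu_{n+1})$ applied at level $n+1$ maps down to $\max(b_n,\mu_{n+1})=b_n$ exactly (functional calculus commutes with $(\pi_n)_{*}$, so the correction lies in $\ker(\pi_n)_{*}$), is a compact perturbation since $\max(t,\mu_{n+1})=t$ on the essential spectrum, and renders the positive part invertible. In fact one can avoid the induction altogether: writing $|\rho(f)|=e^{c}$ with $c=\log|\rho(f)|$ self-adjoint in $\mathcal{Q}(H)$ and lifting $c$ to a self-adjoint $\tilde{c}\in\mathcal{L}(H)$ along the surjection $\rho$ gives the positive invertible $b=e^{\tilde{c}}$ with $\rho(b)=|\rho(f)|$ in one stroke, and $g=vb$ finishes the proof coherently with no levelwise cut-offs at all.
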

\begin{proof}
Suppose that
$ \rho : \mathcal{L}(H) \to   \mathcal{Q}(H)$ is the quotient map
and $\kappa_n: \mathcal{L}(H) \to \mathcal{L}(H_n)$ is the canonical map. By Lemma \ref{Fred1},
the operator $f$ is Fredholm if and only if $f_n=\kappa_n(f)$ is Fredholm for each $n$.
According to \cite[Proposition 1.7]{MIN},
for every $n \in \mathbb{N}$, there exist
$g_n \in \mathcal{L}(H_n)$, $k_n \in \mathcal{K}(H_n)$ and a partial
isometry $v_n \in \mathcal{L}(H_n)$ such
that $g_n=f_n+k_n$ and $g_n=v_n |g_n|$.
We define the operators $g, v \in \mathcal{L}(H)$ and $k \in \mathcal{K}(H)$ as
the coherent sequences $( g_n )_n$, $( v_n )_n$ and $(k_n)_n$, respectively. Therefore,
$v$ is a partial isometry, $g=f + k$ and $g=v |g|$ by \cite[Proposition 3.3.2]{JoitaBook1}.
Consequently, we have
$\rho(v)= \rho (g) \rho( |g| )^{-1}$ and $\rho(v^*)= \rho (g^*) \rho( |g^*| )^{-1}$
which imply $$\rho(v^* \, v)= \rho (g^*) \rho( |g^*| )^{-1} \  \rho (g) \rho( |g| )^{-1}=
 \rho (g^*) \rho (g) \ \rho( |g| )^{-1} \rho( |g| )^{-1}=1 .$$
Similarly, we get $\rho(v^* \, v)=1$. Then $p= 1- v^*v$ and $q= 1-vv^*$ are compact projections
on $H$, as desired.
\end{proof}

If $\{ \psi_n \} : \{G_n\} \to \{G^{'}_{n} \}$ is a morphism of inverse systems such that each $\psi_n$
is an isomorphism, then the maps
$ \varprojlim_{n} \psi_n : \varprojlim_{n} G_n  \to \varprojlim_{n} G^{'}_{n}$ and
$ \varprojlim^{1}_{n} \psi_n : \varprojlim^{1}_{n} G_n  \to \varprojlim^{1}_{n} G^{'}_{n}$ are isomorphisms
by \cite[Proposition 7.63]{Switzer}. We
use these isomorphisms in the proof of the next result.

\begin{theorem} \label{ati6} Let $X$ be a countably compactly generated space
and let $H$ be the standard Hilbert module over an arbitrary unital $\sigma$-C*-algebra $A$.
The inclusion $i:\mathcal{KC}(H) \hookrightarrow \mathcal{F}(H)$  induces an isomorphism
$i_{*}: [X, \mathcal{KC}(H)] \to [X, \mathcal{F}(H)]$. In particular,
$[X, \mathcal{F}(H)]$ is isomorphic to the abelian group ${\rm RK}_0(XA)$.
\end{theorem}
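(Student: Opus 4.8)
The plan is to prove that $i_*$ is a bijection; the final clause then follows by composing $i_*^{-1}$ with the isomorphism $[X,\mathcal{KC}(H)]\cong {\rm RK}_0(XA)$ of Proposition \ref{ati44}, transporting the abelian group structure along the way (and noting $XA=C(X,A)$). Throughout I would use the identifications of Proposition \ref{ati44} and their evident analogue for $\mathcal{F}$, regarding a continuous map $X\to\mathcal{F}(H)$ as a Fredholm family $f\in\mathcal{F}(XH)\cap X\mathcal{L}(H)$ and homotopy classes as path components. Everything rests on one deformation coming from Lemma \ref{ati56}, applied over the unital $\sigma$-C*-algebra $XA$ and the module $XH\cong l^2(XA)$: given such an $f$, Lemma \ref{ati56} produces a compact perturbation $g=f+k$ with polar decomposition $g=v|g|$ and $v\in\mathcal{KC}(H)$. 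The two paths $\tau\mapsto f+\tau k$ and $\tau\mapsto v((1-\tau)|g|+\tau)$, $\tau\in[0,1]$, connect $f$ to $g$ and $g$ to $v$ inside $\mathcal{F}(H)$: the first since a compact perturbation of a Fredholm operator is Fredholm, the second since $\rho((1-\tau)|g|+\tau)=(1-\tau)\rho(|g|)+\tau$ is positive and invertible for every $\tau$ while $\rho(v)$ is unitary. This already yields $[f]=i_*[v]$, so $i_*$ is surjective.

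For injectivity the decisive remark is that when $f$ itself lies in $\mathcal{KC}(H)$ the very same deformation never leaves $\mathcal{KC}(H)$: then $\rho(f)$ is unitary, so $\rho(f+\tau k)=\rho(f)$ and $\rho(|g|)=(\rho(f)^*\rho(f))^{1/2}=1$, and hence $\rho$ of every operator along both stages is unitary. Now suppose $s_0,s_1\colon X\to\mathcal{KC}(H)$ are joined by a homotopy $F\colon X\times I\to\mathcal{F}(H)$. Because $I$ is compact and $X=\varinjlim_n X_n$, the product $X\times I=\varinjlim_n(X_n\times I)$ is again countably compactly generated, so the deformation above applies to the Fredholm family $F$ over $X\times I$ and produces $V\in\mathcal{KC}(H)$ over $X\times I$ together with a two-stage homotopy from $F$ to $V$. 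Restricting that homotopy to $X\times\{0\}$ and to $X\times\{1\}$, where $F$ equals $s_0$ and $s_1$, the decisive remark guarantees the restricted paths remain in $\mathcal{KC}(H)$ and connect $s_0$ to $V_0:=V|_{X\times\{0\}}$ and $s_1$ to $V_1:=V|_{X\times\{1\}}$. Since $V$ is itself a homotopy in $\mathcal{KC}(H)$ from $V_0$ to $V_1$, concatenation gives $s_0\simeq V_0\simeq V_1\simeq s_1$ in $\mathcal{KC}(H)$, so $[s_0]=[s_1]$ in $[X,\mathcal{KC}(H)]$ and $i_*$ is injective.

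I expect the main obstacle to be the bookkeeping that makes this endpoint argument legitimate: one must check that Lemma \ref{ati56} applied over $X\times I$ really returns honest continuous families (so that the operators lie in $(X\times I)\mathcal{L}(H)$ and the path-component identifications of Proposition \ref{ati44} are available), and, above all, that restricting the deformation to $X\times\{0,1\}$ reproduces deformations of $s_0$ and $s_1$ that stay essentially unitary. The latter is exactly what the computations $\rho(f+\tau k)=\rho(f)$ and $\rho(|g|)=1$ secure, so once the continuity and family bookkeeping is settled the argument closes, and composing with Proposition \ref{ati44} gives $[X,\mathcal{F}(H)]\cong{\rm RK}_0(C(X,A))$.
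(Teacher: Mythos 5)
You identified the main obstacle yourself but then misjudged its weight: the continuity of the families returned by Lemma \ref{ati56} is not ``bookkeeping'' to be settled afterwards, it is the crux, and as written your proof is incomplete precisely there. Your deformation ends at the polar part $v$ supplied by Lemma \ref{ati56} applied over $XA$, and your second-stage path $\tau\mapsto v((1-\tau)|g|+\tau)=(1-\tau)g+\tau v$ must consist of \emph{continuous maps} $X\to\mathcal{F}(H)$, i.e. of elements of $X\mathcal{L}(H)=C(X,\mathcal{L}(H))$, for the conclusion $[f]=i_*[v]$ in $[X,\mathcal{F}(H)]$ to make sense. But Lemma \ref{ati56} only delivers $v\in\mathcal{L}(XH)\cong M(\mathcal{K}(XH))\cong M(X\mathcal{K}(H))$, built levelwise from Mingo's C*-result as a coherent sequence $(v_n)_n$; such multiplier elements are only ``strictly continuous'' families and in general do \emph{not} lie in the subalgebra $X\mathcal{L}(H)$ (already for $A=\mathbb{C}$ and $X$ compact, $M(C(X,\mathcal{K}))$ strictly contains $C(X,\mathcal{L})$). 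The perturbation $k$ is safe, since $k\in\mathcal{K}(XH)\cong X\mathcal{K}(H)$ is an honest continuous map, but nothing in your argument makes $x\mapsto v(x)$ continuous in the seminorm topology, so for $\tau>0$ your path points need not be continuous families at all; the same defect infects the injectivity step, where you restrict $V$ over $X\times I$ to the two ends.

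The gap is reparable, and the repair is exactly what the paper's construction of $a$, $b$ and $w$ exists to provide. Since $p=1-v^*v$ and $q=1-vv^*$ are compact, they are continuous families; then $a=g^*g+p$ and $b=gg^*+q$ are positive invertible continuous families with $|g|\,p=0$, $vp=0$ and $ap=p$, from which one derives the closed formula $v=g\,(g^*g+p)^{-1/2}=(gg^*+q)^{-1/2}g$ --- that is, your $v$ coincides with the paper's $w=b^{-1/2}g$, which lies in $X\mathcal{L}(H)$ because functional calculus applied to a continuous family of positive invertibles is again a continuous family. With that inserted, your straight-line second stage works ($\rho$ of it equals $\rho(v)\bigl((1-\tau)|\rho(g)|+\tau\bigr)$, invertible for every $\tau$), and your endpoint-tracking computations $\rho(s_i+\tau K_i)=\rho(s_i)$ and $\rho(|G_i|)=1$ do make the injectivity step sound. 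It is worth noting that, after the repair, your architecture genuinely differs from the paper's: the paper proves its two deformation statements only for compact $X$ and then globalizes to $X=\varinjlim_n X_n$ via Milnor's $\varprojlim^1$ exact sequence and the five lemma, whereas you run the deformation directly over $X$ and over $X\times I$, avoiding $\varprojlim^1$ altogether; this is legitimate but silently uses that $X\times I=\varinjlim_n(X_n\times I)$ is again countably compactly generated ($I$ being compact) and that the map/family and homotopy/family identifications of Proposition \ref{ati44} hold in that generality, which should be stated explicitly.
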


\begin{proof} We first prove the following assertions for a compact Hausdorff space $X$.
\begin{enumerate}[label=(\subscript{H}{\arabic*})]
\item Any
continuous map $f : X \to \mathcal{F}(H)$ is homotopic to a map whose image is contained
in $\mathcal{KC}(H)$.
\item Any
continuous map $h : [0,1] \times X \to \mathcal{F}(H)$ for which the images of $h_0$ and
$h_1$ are contained in $\mathcal{KC}(H)$, is homotopic to a map whose image is contained
in $\mathcal{KC}(H)$.
\end{enumerate}
To prove the assertion ($H_1$) we suppose $f : X \to \mathcal{F}(H)$ is an arbitrary continues map. It
determines the corresponding bounded adjointable operator $f \in \mathcal{L}(XH)$ by
$(f( \psi))(x)=f(x) \psi(x)$. Since $f \in \mathcal{F}(XH)$,
there exists a compact perturbation of $g \in  \mathcal{L}(XH)$ of $f$ which
satisfies in the fixed conditions of Lemma \ref{ati56}. Via the isomorphism $X \mathcal{K}(H) \cong \mathcal{K}(XH)$
the operator $g-f$ corresponds to a continuous map $k: X \to \mathcal{K}(H)$.
Then for every $\theta \in [0,1]$ we can define $f_{\theta}= f + \theta k : X  \to \mathcal{F}(H)$,
which is a homotopy between the maps $f$ and $g$.

The compact projections $p, q \in \mathcal{K}(XH)$
onto the kernels of $g$ and $g^*$ stratify $qg=0=gp$ and $pg^*=0=g^*q$. The projections
$p$ and $q$ via the isomorphism $X \mathcal{K}(H) \cong \mathcal{K}(XH)$ can be identified
with continuous maps $p, q: X \to \mathcal{K}(H)$, which enable us to define
$a, b : X \to \mathcal{F}(H)$ by
$$a:=g^*g + p ~ \ ~~{\rm and}~ \ ~~ b:=gg^*+q.$$
These maps are continuous and satisfy $bg=ga=gg^*g$ and $ag^*=g^*b=g^*gg^*$. For every
$x \in X$, $a(x)$ and $b(x)$ are positive invertible operators. We therefore have
$b^{ \gamma}g= g a^{ \gamma}$ and $a^{ \gamma}g^*= g^* b^{ \gamma}$, for any real number $ \gamma$.
We define the continuous map
$$w=b^{-1/2}g: X \to \mathcal{KC}(H),$$
where $$1-w^*w=1- a^{-1/2}g^* b^{-1/2}g = a a^{-1}- a^{-1} g^* g=a^{-1}p \in \mathcal{K}(H),$$
$$1-ww^*=1- b^{-1/2}g a^{-1/2}g^* = b b^{-1}- b^{-1} gg^* =b^{-1}q \in \mathcal{K}(H).$$
Then for every $\theta \in [0,1]$ we can define the continuous map $$g_{\theta}=
 (\theta + (1- \theta)b^{-1})^{1/2}g : X  \to \mathcal{F}(H),$$ which is
 a path joining $g$ and $w$. One can easily check that the map $(\theta + (1- \theta)b^{-1})^{1/2}$
 takes its values in the positive invertible operators for all $\theta \in [0,1]$ and so
 the operators $g_{ \theta}(x)$ are Fredholm. To see this suppose that $s_1(x)$  is a left
 generalized inverse for $g(x)$ then $s_1(x) (\theta + (1- \theta)b(x)^{-1})^{-1/2}$ is a left generalized inverse
 of $g_{ \theta}(x)$. If $s_2(x)$ is a right generalized inverse of $g(x)$, then
 $s_2(x) (\theta + (1- \theta)b(x)^{-1})^{-1/2}$ is a right generalized inverse
 of $g_{ \theta}(x)$. Then the continuous map
 $\mathfrak{H}: [0,1] \times X \to \mathcal{F}(H)$ given by
\[  \mathfrak{H}( \theta, x)=
\begin{cases}
    f_{ 2 \theta}(x)    & \text{if } 0 \leq \theta \leq 1/2\\
    g_{ 2- 2 \theta}(x) & \text{if } 1/2 \leq \theta \leq 1
\end{cases}
\]
is a homotopy between the maps $f : X \to \mathcal{F}(H)$ and $\omega : X \to \mathcal{KC}(H)$,
which completes the proof of ($H_1$).
The assertion ($H_2$) can be obtained in a similar way. Surjectivity and
injectivity of the map $i_{*}: [X, \mathcal{KC}(H)] \to [X, \mathcal{F}(H)]$ immediately follow from the
properties ($H_1$) and ($H_2$).

We now suppose $X= \varinjlim_{n} X_n$ when each $X_n$ is a compact Hausdorff space.
Using  \cite[Lemma 2]{Milnor}, we
obtain the following commutative diagram with exact rows
\begin{equation*}
\begin{array}{ccccccccc} &&&  &&  &&  \\
0 & \longrightarrow & \varprojlim_{n} ^{1} [SX_{n}, \, \mathcal{KC}(H)] & \longrightarrow &
[X, \,  \mathcal{KC}(H)] & \longrightarrow & \varprojlim_{n} [X_n, \,  \mathcal{KC}(H)] & \longrightarrow & 0 \\
&& \quad\big\downarrow \cong && \quad\big\downarrow  && \quad\big\downarrow \cong \\
0 & \longrightarrow & \varprojlim_{n} ^{1} [S X_{n}, \, \mathcal{F}(H)] & \longrightarrow & [X, \,  \mathcal{F}(H)]
& \longrightarrow & \varprojlim_{n} [X_n, \,  \mathcal{F}(H)] & \longrightarrow & 0.
\end{array}
\end{equation*}
The vertical maps
$[X_n, \,  \mathcal{KC}(H)] \to [X_n, \,  \mathcal{F}(H)]$ and
$ [S X_{n}, \, \mathcal{KC}(H)] \to [SX_n, \,  \mathcal{F}(H)]$ are
isomorphisms by the first part of the proof. Therefore, the middle
vertical map is an isomorphism by the five lemma. In particular, the abelian
groups $[X, \mathcal{F}(H)]$ and ${\rm RK}_0(XA)$ are isomorphic by Proposition \ref{ati44}.
\end{proof}



Let $A$ be a unital C*-algebra and let $X$ be
a compact Hausdorff space. An $A$-vector bundle over $X$ is a locally trivial bundle over
$X$ whose fibers are finitely generated
projective $A$-modules, with $A$-linear
transition functions. Then, by a result of Rosenberg \cite[Proposition 3.4]{RosenbergBook},
the Grothendieck group of $A$-vector bundles ${\rm K}^{0}(X;A)$ is
naturally isomorphic to ${\rm K}_{0}(XA)$. This fact can also be deduced from
the results of Toritsky and Mingo. However, by \cite[Example 4.9]{Phillips89k},
the Grothendieck groups of $A$-vector bundles over $X$ need not be isomorphic to
${\rm RK}_0(XA)$, when $A$ is a $\sigma$-C*-algebra.

\begin{corollary} \label{ati7} Let $H$ be the
standard Hilbert module over an arbitrary unital $\sigma$-C*-algebra $A$. Then
$\pi_i( \mathcal{F}(H))$ is isomorphic to ${\rm RK}_i( A)$ and satisfies in the
following short exact sequence of abelian groups
\begin{equation}\label{ati71}
  0 \to \varprojlim_{n}\, ^{1} \, \pi_{1-i}(\mathcal{F}(H_n)) \to \pi_i( \mathcal{F}(H) ) \to
  \varprojlim_{n} \pi_i(\mathcal{F}(H_n)) \to 0.
\end{equation}
In particular, we have
$ \pi_i( \mathcal{F}(H) ) \cong \varprojlim_{n} \pi_i(\mathcal{F}(H_n))$ when
the conditions of Proposition \ref{MittLeff} are fulfilled.
\end{corollary}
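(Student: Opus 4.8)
The plan is to prove the two assertions in turn, deducing the first from Theorem \ref{ati6} and the second from the Milnor $\varprojlim^1$ sequence of Theorem \ref{philllimit}.

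First I would establish $\pi_i(\mathcal{F}(H)) \cong {\rm RK}_i(A)$. The $i$-sphere $S^i$ is compact Hausdorff, hence trivially a countably compactly generated space, so Theorem \ref{ati6} applies and gives $[S^i, \mathcal{F}(H)] \cong {\rm RK}_0(C(S^i, A))$. Evaluation at a basepoint yields the split short exact sequence $0 \to C_0(\mathbb{R}^i) \to C(S^i) \to \mathbb{C} \to 0$; tensoring with $A$, using that RK-theory sends split exact sequences to split exact sequences together with $C(S^i) \otimes A \cong C(S^i, A)$ and $C_0(\mathbb{R}^i) \otimes A = S^i A$, I obtain ${\rm RK}_0(C(S^i,A)) \cong {\rm RK}_0(S^i A) \oplus {\rm RK}_0(A) = {\rm RK}_i(A) \oplus {\rm RK}_0(A)$. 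On the topological side the same basepoint evaluation splits $[S^i, \mathcal{F}(H)]$ as $\pi_i(\mathcal{F}(H)) \oplus \pi_0(\mathcal{F}(H))$, the H-space structure on $\mathcal{F}(H)$ inducing the group law of Lemma \ref{ati4} forcing the $\pi_0$-action on $\pi_i$ to be trivial so that free and based classes differ only by this summand. Matching the two splittings and noting $\pi_0(\mathcal{F}(H)) \cong {\rm RK}_0(A)$ (the case $X = \mathrm{pt}$ of Theorem \ref{ati6}) gives $\pi_i(\mathcal{F}(H)) \cong {\rm RK}_i(A)$.

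Next I would feed the defining system $A \cong \varprojlim_n A_n$ into Theorem \ref{philllimit}. Applying the first assertion to each C*-algebra $A_n$, where ${\rm RK}_j(A_n) = {\rm K}_j(A_n)$, yields isomorphisms $\pi_j(\mathcal{F}(H_n)) \cong {\rm K}_j(A_n)$ for $j=i$ and $j=1-i$. These are natural in the surjective connecting maps $\pi_n: A_{n+1} \to A_n$ and the induced maps $(\pi_n)_*: \mathcal{F}(H_{n+1}) \to \mathcal{F}(H_n)$ of Lemma \ref{Fred1}, hence assemble into a morphism of inverse systems $\{\pi_j(\mathcal{F}(H_n))\} \to \{{\rm K}_j(A_n)\}$ all of whose components are isomorphisms. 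Identifying ${\rm RK}_i(\varprojlim_n A_n) = {\rm RK}_i(A) \cong \pi_i(\mathcal{F}(H))$ by the first part and transporting the $\varprojlim_n$ and $\varprojlim^1_n$ terms of (\ref{ati1}) across this morphism by \cite[Proposition 7.63]{Switzer} produces the exact sequence (\ref{ati71}).

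I expect the first step to be the main obstacle: one must take care that $\pi_i$ denotes \emph{based} homotopy classes and therefore corresponds to the reduced group ${\rm RK}_i(A) = {\rm RK}_0(S^i A)$, not to the unreduced $[S^i, \mathcal{F}(H)]$, which carries the extra $\pi_0$-summand; pinning this down is exactly what the H-space splitting and the compatibility of the basepoint evaluation on the two sides accomplish. Once this and its naturality are secured the remaining steps are formal. Finally, the `in particular' clause is immediate: under either hypothesis of Proposition \ref{MittLeff} the system $\{{\rm K}_{1-i}(A_n)\} \cong \{\pi_{1-i}(\mathcal{F}(H_n))\}$ satisfies the Mittag-Leffler condition, so its $\varprojlim^1$ vanishes and (\ref{ati71}) collapses to $\pi_i(\mathcal{F}(H)) \cong \varprojlim_n \pi_i(\mathcal{F}(H_n))$.
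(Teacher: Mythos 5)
Your proof is correct and takes essentially the same route as the paper: the paper establishes the first isomorphism by comparing, via Theorem \ref{ati6} applied to $S^i$ and to a point, the exact row $0 \to \pi_i(\mathcal{F}(H)) \to [S^{i}, \mathcal{F}(H)] \to [\mathcal{F}(H)] \to 0$ with the split row $0 \to {\rm RK}_0(C_0(\mathbb{R}^{i}) \otimes A) \to {\rm RK}_0(C(S^{i}) \otimes A) \to {\rm RK}_0(A) \to 0$ coming from evaluation at the north pole, which is exactly your matched-splittings argument (your H-space discussion merely makes explicit the based-versus-free point that the paper's top row leaves implicit). Your second step --- transporting the Milnor sequence (\ref{ati1}) across the levelwise isomorphisms $\pi_j(\mathcal{F}(H_n)) \cong {\rm K}_j(A_n)$ via \cite[Proposition 7.63]{Switzer} --- is precisely how the paper deduces (\ref{ati71}) from Lemma \ref{Fred1} and Theorem \ref{philllimit}, including the Mittag-Leffler vanishing for the ``in particular'' clause.
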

\begin{proof} The first isomorphism
follows from the following diagram
\begin{equation*}
\begin{array}{ccccccccc} &&&  &&  &&  \\
0 & \longrightarrow & \pi_i( \mathcal{F}(H)) & \longrightarrow &
[S^{i}, \,  \mathcal{F}(H)] & \longrightarrow & [ \mathcal{F}(H)] & \longrightarrow & 0 \\
&& \quad\big\downarrow  && \quad\big\downarrow \cong && \quad\big\downarrow \cong \\
0 & \longrightarrow & {\rm RK}_0( C_0( \mathbb{R}^{i}) \otimes A) & \longrightarrow &  {\rm RK}_0( C(S^{i}) \otimes A)
& \longrightarrow & {\rm RK}_0(A) & \longrightarrow & 0,
\end{array}
\end{equation*}
in which the horizontal maps obtain from inclusion and evaluation at the north pole respectively.
The short exact sequence (\ref{ati71}) is obtained from Lemma \ref{Fred1} and the short exact sequence (\ref{ati1}).
\end{proof}
We close the paper with the following problems which are motivated by Theorem \ref{ati6} and
Corollary \ref{ati7}. Suppose that $H$ is the standard Hilbert module over an arbitrary
unital $\sigma$-C*-algebra $A$.
\begin{problem}\label{prob1}
Characterize those
$\sigma$-C*-algebras $A$ for which
\begin{equation}\label{ati711}
 \cdot \cdot \cdot \to \mathcal{F}(H_{n+1}) \to \mathcal{F}(H_n) \cdot \cdot ~ \cdot ~ \to \mathcal{F}(H_1)
\end{equation}
is a sequence of maps having homotopy lifting property.
\end{problem}

\begin{problem}\label{prob2}
Characterize those
$\sigma$-C*-algebras $A$ for which $\mathcal{KC}(H) \subseteq \mathcal{F}(H)$
is a deformation retract, that is,
there is a map $r: \mathcal{F}(H) \to \mathcal{KC}(H)$ with $r \, i = 1_{ \mathcal{KC}(H) }$
and the map $i \, r$ is homotopy equivalent to $1_{ \mathcal{F}(H) }$.
\end{problem}
If $A$ is a unital C*-algebra and $H=l^2(A)$, one can apply functional calculus to show that
$\mathcal{KC}(H) \subseteq \mathcal{F}(H)$ is a deformation retract.

{\bf Acknowledgement}:\,This research was done while the
author stayed at the Ma\-the\-matisches Institut of the Westf\"alische
Wilhelms-Universit\"at in M\"unster, Germany, in 2015. He would like to
express his thanks to Wend Werner and his
colleagues in functional analysis and operator algebras group for
warm hospitality and great scientific atmosphere.
The author is indebted to
Claude L. Schochet for his helpful comments
on an earlier draft of this paper.
The author also would like to thank
the referee for his/her careful reading and useful comments.

\end{document}